\newif\ifms
\msfalse
\ifms
    \documentclass[11pt]{article} 
    \usepackage[margin=1in]{geometry}
    \usepackage{setspace}
    \usepackage{lineno}           
    \linenumbers                  
\else
    \documentclass[11pt, letterpaper]{article}
    \usepackage[margin=1in]{geometry}
\fi

  \usepackage[centertags]{amsmath}
  \usepackage{amsfonts}
  \usepackage{amsthm}
  \usepackage{amssymb}
  \usepackage{bm}
  \usepackage{latexsym}
  \usepackage{graphics}
  \usepackage{graphicx}
  \usepackage{subcaption} 
  \usepackage{tabularx}
  \usepackage{caption}

  \usepackage[hidelinks]{hyperref}
  \ifms
  \else
  \hypersetup{
    pdftitle={Supply Chain Networks},
    pdfauthor={Elioth Sanabria},
    pdfsubject={Research article},
    pdfkeywords={Supply Chain Networks, Risk Propagation, Optimal Control, Bullwhip Effect, Price Elasticity, Dynamic Trade Rebalancing, Energy Security},
    pdfstartview={FitH}
}
\fi
\usepackage{algorithm} 
\usepackage{algorithmicx}
\usepackage{algpseudocode}
\usepackage[style=authoryear, maxcitenames=2]{biblatex}
\renewbibmacro{in:}{%
  \ifentrytype{article}{}{\printtext{\bibstring{in}\intitlepunct}}%
}
\renewbibmacro*{volume+number+eid}{%
  \printfield{volume}%
  \setunit*{\addnbspace}%
  \printfield{number}%
  \setunit{\addcomma\space}%
  \printfield{eid}}
\DeclareFieldFormat[article]{number}{\mkbibparens{#1}}
\usepackage{tabularx}
\usepackage{array} 
\usepackage{booktabs}
\newcolumntype{L}[1]{>{\raggedright\arraybackslash}p{#1}}
\newcolumntype{R}[1]{>{\raggedleft\arraybackslash}p{#1}}
\newcolumntype{C}[1]{>{\centering\arraybackslash}p{#1}}

\usepackage{tikz}
\usetikzlibrary{graphs, positioning, calc, arrows.meta, 3d, fit, backgrounds, matrix, decorations.pathreplacing}

\usepackage{pgfplots}
\pgfplotsset{compat=1.17}
\usepgfplotslibrary{fillbetween, groupplots}
\usepackage{pgfplotstable}

\usepackage{fancyhdr}
\ifms
\else
    
\fi

\fancypagestyle{plain}{
    \fancyhf{}
    \fancyfoot[C]{\small\thepage}
    
}

\newcommand{\ex}{{\bf\sf E}}               
\newcommand{\var}{\mbox{\sf Var}}
\newcommand{\cov}{\mbox{\sf Cov}}

\newcommand{\cals}{{\cal S}}

\newcommand{\cala}{{\cal A}}

\newcommand{\calh}{{\cal H}}

\def\ind{{\bf 1}}


\theoremstyle{definition} 
\newtheorem{thm}{Theorem}
\newtheorem{lem}[thm]{Lemma}

\newtheorem{defn}[thm]{Definition}
\newtheorem{rem}{Remark}

\ifms
    \newtheorem{exm}{Example} 
\else
    \newtheorem{exm}{Example}[section] 
\fi

\title{\vspace{-1.5cm} Supply Chain Networks}

\ifms
    \author{Anonymous Authors}
    \date{\small \today}
\else
    \author{
        Elioth Sanabria \\[0.2cm]
        \small Department of Decision and Technology Analytics \\ 
        \small Lehigh University - College of Business\\
        \small \texttt{els626@lehigh.edu}
    }
    \date{\small \today}
\fi

\begin{document}

\maketitle
\begin{abstract}
\noindent This study provides a quantitative framework for analysis of systemic demand uncertainty and risk propagation cascades across general supply chain networks. By leveraging properties derived from stochastic networks embedded within a Newsvendor paradigm, we model multi-echelon networks under equilibrium and transient operational regimes. We mathematically validate that the systemic volatility behavior commonly referred to as the Bullwhip effect persists entirely as an unavoidable, inherent topological property of coordinated logistics networks, independent of traditional operational noise or information visibility constraints. Extending this paradigm to transient environments, we model inventory drawdown horizons as a multi-dimensional Skorokhod reflection problem. Crucially, we endogenize market-clearing feedback loops by incorporating non-linear price elasticity mechanisms and dynamic trade relation rebalancing, demonstrating how decentralized rational actions co-evolve with physical capacity bottlenecks and can accelerate systemic network degradation. Finally, we operationalize the framework through a data-driven numerical experiment mapping global oil trade dynamics, showing how localized chokepoint disruptions, such as a capacity shock in the Strait of Hormuz, trigger non-linear cascading stockouts and systemic reallocation across sovereign buffers over time.
\vspace{0.4cm} \\
\textbf{Keywords:} Supply Chain Networks, Risk Propagation, Bullwhip Effect, Price Elasticity, Energy Security.
\end{abstract}

\vspace{0.5cm}
\setlength{\parskip}{0.4em}
\section{Introduction}
Modern industrial supply networks are highly interconnected, non-linear economic systems wherein localized demand signals, downstream procurement strategies, and upstream physical capacities continuously interact. Traditionally, corporate operations have been analyzed through the lens of decentralized, linear pipelines where supply chains are viewed as sequential echelons transferring physical goods from raw material extractors to downstream retail markets. However, systemic macroeconomic shifts, global geopolitical frictions, and the consolidation of critical transit nodes have rendered this linear archetype obsolete. Today's commercial ecosystem is more accurately characterized as an intricate topological network where downstream entities frequently source from multiple intermediaries, and multi-lateral cross-shipping arrangements introduce complex feedback loops.

Within these network topologies, the management of stochastic demand remains a central challenge in operations research. Fundamentally, a profit-maximizing firm facing stochastic demand must strike an economic balance between the marginal cost of over-ordering and the structural penalty of under-ordering. In single-firm or decoupled multi-echelon settings, this trade-off is elegantly resolved via the classical Newsvendor paradigm, which defines an optimal inventory buffer comprising the expected demand augmented by a safety stock proportional to local demand volatility. Yet, when firms are embedded within a fluid network, demand ceases to be an exogenous stochastic process. Instead, the demand experienced by an upstream producer becomes an endogenous function of the optimal ordering decisions, safety-stock choices, and physical capacity constraints of its entire downstream network.

Consequently, operational risk propagates and amplifies as it ripples backward through the network topology. This systemic amplification is further exacerbated by operational lead times and physical infrastructure constraints, creating structural vulnerabilities that increase the probability of widespread stockouts. While conventional management literature frequently attributes order variance amplification, popularly termed the ``Bullwhip Effect'', to behavioral distortions, information delays, or forecasting inaccuracies, a rigorous network-theoretic perspective suggests a more profound reality. If supply chain networks are viewed through the mathematical paradigm of stochastic processing networks, variance amplification can be analyzed as an invariant structural feature of the underlying routing topology itself.

Crucially, this operational vulnerability is further compounded by the dynamic co-evolution of market prices and state-dependent safety-stock policies. Rather than treating trade pathways and risk preferences as static parameters, a comprehensive analysis must account for how localized capacity disruptions restrict total physical throughput, endogenously spiking market clearing prices through non-linear elasticity effects. This price appreciation dynamically shifts localized critical ratios and forces an upward revision of safety stock parameters based on the underlying probability distributions of service levels. Consequently, surviving nodes are compelled to continuously rebalance their trade allocations to bypass depleted or bottlenecked corridors. This coupling of continuous-time physical depletion rates with macro-level price feedback loops creates an adaptive, state-dependent network topology where decentralized, individually rational actions can systematically accelerate global systemic degradation.

This study formalizes these dynamics by synthesizing stochastic fluid network theory with decentralized, profit-maximizing inventory control models. We depart from traditional multi-echelon frameworks that rely on exogenous input requirements or strict tree-structured topographies. Instead, we develop an analytical framework capable of evaluating general, loopy network structures under both steady-state equilibrium and transient operational regimes. By mapping the propagation of both the first moment (mean demand) and the second moment (demand covariance) across the network, while explicitly modeling the price-sensitive behavior of structural safety parameters, we provide an analytical characterization of how localized market shocks scale into systemic supply disruptions. 

Ultimately, this paper bridges the gap between macro-level network structures and micro-level inventory policies. We demonstrate that even under conditions of perfect information visibility and automated forecasting coordination, the physical layout of trade flows, price elasticities, and structural capacities imposes fundamental bounds on network resilience. Through this synthesis, we offer a rigorous toolkit for operations managers, supply chain architects, and policymakers tasked with diagnosing structural vulnerabilities and sizing strategic inventory buffers within critical global trade corridors or multi-layered enterprise supply chains.

\subsection{Contributions of the Paper}
This paper advances the theoretical foundations of operations management and network control theory along four primary trajectories:
\begin{itemize}
    \item \textbf{A Unified Fluid-Stochastic Network Paradigm:} We construct a rigorous analytical bridge that embeds decentralized Newsvendor optimizations directly into general fluid network topologies. By endogenizing the demand structures of upstream entities as functions of downstream safety-stock targets, our model characterizes the simultaneous propagation of mean demand and demand covariance through closed-form linear and matrix operators. This framework naturally translates non-linear inventory hedging behaviors into tractable network flow equations.
    \item \textbf{Structural Characterization of the Bullwhip Invariant:} We provide a topological proof demonstrating that the Bullwhip effect persists as an invariant geometric property dictated by the spectral characteristics of the network routing matrix. By leveraging the properties of $M$-matrices, we isolate the variance amplification phenomenon from traditional behavioral distortions, forecasting lags, or information asymmetries. This proves that structural routing loops inherently generate order amplification even under perfect, instantaneous information visibility.
    \item \textbf{Dynamic Multi-Dimensional Skorokhod Mapping:} We extend our static equilibrium framework into continuous time by tracking operational bottlenecks as a multi-dimensional Skorokhod reflection problem. This mathematical formulation explicitly isolates downstream times-to-exhaustion ($\tau$) and determines systemic drift rates ($\theta$) across asymmetric network partitions, supplying an event-driven architecture capable of tracing real-world geopolitical disruptions over time-varying horizons.
    \item \textbf{Endogenous Price Elasticity and Adaptive Trade Rebalancing:} We internalize active market feedback loops by making the network topology and safety stock targets state-dependent. Rather than treating trade pathways and risk preferences as static parameters, our enhanced framework models how localized disruptions alter throughput volumes ($Q(t)$), dynamically spike market prices ($p(t)$) via non-linear elasticity curves (demand and supply elasticities $\varepsilon,\eta$), and forces surviving nodes to continuously rebalance trade proportions ($P(t)$). This endogenously couples physical asset depletion with macroeconomic market-clearing dynamics.
\end{itemize}

\subsection{Literature Review}
Our theoretical framework interfaces with four distinct streams of literature: stochastic fluid networks, decentralized multi-echelon inventory control, systemic risk metrics in network clearing models, and endogenous economic pricing under capacity constraints.

The study of stochastic processing networks via fluid and diffusion approximations has a rich heritage in operations research. The foundational mathematical principles governing fluid limits for multi-class queueing networks were extensively developed by \cite{chen2001fundamentals}, providing rigorous asymptotic properties for tracking network stability under steady-state regimes. While these queueing-theoretic models excel at capturing continuous-time processing bottlenecks, they typically treat nominal input flows as exogenous parameters. Our work enriches this domain by endogenizing the network's internal flow requirements through corporate profit-maximizing behaviors.

To govern these localized node-level behaviors, we draw upon classical inventory theory originating from \cite{arrow1951optimal}, who formalized the foundational critical-ratio trade-offs underlying stochastic inventory management. In multi-echelon environments, \cite{clark1960optimal} provided the seminal structural proof establishing the optimality of base-stock policies for serial supply chains. This linear echelon-stock concept was subsequently generalized to arborescent (tree-structured) networks by researchers such as \cite{federgruen1984approximations}. However, these classical models struggle with computational tractability and structural uniqueness when applied to general, non-arborescent networks containing closed loops or multi-source transshipment hubs. Our fluid network formulation bypasses these structural limitations by leveraging continuous linear transformations to preserve mathematical tractability across arbitrary network topologies.

This structural focus directly connects our model to the extensive literature on the Bullwhip Effect. First popularized empirically in \cite{lee1997bullwhip}, traditional operations management literature isolates four primary behavioral and institutional drivers of variance amplification: demand signal processing, rationing games, order batching, and short-term price variations. In the analytical stream, \cite{chen2000quantifying} derived closed-form lower bounds on variance amplification in a serial supply chain with demand forecasting and lead times, establishing the influential result that centralizing demand information reduces, but cannot eliminate, the bullwhip effect. Our topological characterization extends this impossibility result from the forecasting channel to the network structure itself: even with forecasting and lead-time effects removed entirely, the $M$-matrix geometry of the routing topology enforces a strictly positive amplification floor ($H_{ii}\ge1$, Theorem \ref{thm:bullwhip}). While modern enterprise coordination and supply chain visibility mechanisms, as detailed in \cite{simchi1999designing}, can mitigate information-driven distortions, our analysis proves that order amplification persists due to structural routing geometries. This complements the analytical work of \cite{cachon2007search}, who noted empirical variance expansions that decoupled  from behavioral phenomena, by providing a definitive topological explanation  for this baseline volatility.

Our transient analysis also connects to the operations literature on supply chain disruptions. The time-to-recover/time-to-survive framework of \cite{simchilevi2015risk} quantifies, for a given firm, how long a node outage can be sustained before performance degrades; our times-to-exhaustion $\tau_k$ answer the network-level analogue endogenously, with each node's survival horizon determined jointly by the routing topology, the clearing partition, and the strategic responses of every other node rather than by a single-firm stress scenario. In the economics of production networks, \cite{acemoglu2012network} show that input-output linkages propagate microeconomic shocks into aggregate fluctuations through a Leontief multiplier, and \cite{carvalho2021supply} document this propagation empirically following the Great East Japan earthquake; our model shares the $M$-matrix multiplier structure but couples it to inventory buffers, capacity constraints, and price-dependent safety-stock policies absent from that literature. Within operations management, \cite{ang2017disruption} and \cite{bimpikis2019supply} study optimal sourcing and network design under disruption risk in multi-tier systems, treating the network as a decision variable ex ante; our framework is complementary, characterizing how an arbitrary given topology behaves ex post as a disruption unfolds, and \cite{osadchiy2016systematic} provide empirical evidence that supply network position loads systematic risk onto firms, consistent with the topological amplification floor we derive.

Methodologically, our static equilibrium layout shares deep algebraic duality with the systemic risk and default cascade structures extensively explored in financial economics. Specifically, our structural mapping mirrors the inter-bank liability clearing models established by \cite{eisenberg2001systemic}, and subsequently expanded in \cite{glasserman2016contagion}. Where the Eisenberg-Noe framework leverages fixed-point arguments on lattices to determine asset clearing vectors under capital deficits, our model utilizes dual linear programming to split a logistical network into hedged ($\mathcal{H}$) and unhedged ($U$) nodes under physical capacity constraints. 

Crucially, we extend this structural network paradigm by endogenizing market-clearing pricing mechanics and adaptive trade rebalancing. While classical supply chain network models (e.g., \cite{dong2005multitiered}) evaluate equilibrium prices using variational inequalities under static multi-criteria environments, they rarely interface with dynamic, state-dependent safety-stock choices ($z_i(t)$). Conversely, financial network models that incorporate fire-sales or endogenous asset price degradation, such as \cite{veraart2020distress}, capture pricing feedbacks but do so in stylized asset domains lacking physical inventory buffers or discrete backlog horizons. Our framework bridges this gap: by coupling continuous-time physical depletion rates ($\theta$) with non-linear price elasticity curves ($\varepsilon,\eta$), we allow prices ($p(t)$) to respond endogenously to total physical throughput ($Q(t)$). This price appreciation shifts localized critical ratios ($\rho_i(t)$) and safety parameters ($z_i(t)$) via the inverse standard normal distribution, forcing a state-dependent co-evolution of risk preferences and physical capacity boundaries that is notably absent from static-price literature.

Finally, our transient dynamic formulation relies on the mathematical framework of the Skorokhod Problem \cite{skorokhod1961stochastic}. Originally introduced to govern the sample paths of stochastic differential equations restricted to closed boundaries, reflected diffusion processes have been widely adopted to evaluate heavy-traffic processing limits in manufacturing systems (e.g., \cite{harrison2013brownian}). This study adapts those boundary reflection behaviors to model discrete inventory stockouts and physical capacity throttling. By executing state-dependent corrections to the routing matrix $P(t)$ at discrete buffer exhaustion epochs $\tau_k$, our model highlights a compounding failure pattern where localized strategic trade adjustments systematically exacerbate global systemic network degradation.

\section{Model}
\label{sec:model}
Our modeling paradigm combines two ideas to build a supply chain network model: The first is that every node pursues a profit maximizing behavior, that is, each node has a profit function where each node makes their production decision $q_i$. This production decision can be either atemporal or indexed by time in the dynamic version of the model and follows the Newsvendor-type structure (see \cite{arrow1951optimal}) by finding the quantity that maximizes the expected profit. The second idea, is that the demand every node faces depends on the network structure of how each node satisfies the demand of other nodes in a Fluid Network (see \cite{chen2001fundamentals} for an introduction) where the production of some node is the demand of preceding nodes. Concatenating these relations leads to what we define as a Supply Chain Network, see Figure \ref{fig:supply_chain_network} for an example.

\begin{figure}[htb]
    \centering
    \definecolor{bhdeepblue}{RGB}{12, 102, 148}   
    \definecolor{bhmidblue}{RGB}{62, 153, 209}    
    \definecolor{bhlightblue}{RGB}{122, 206, 240} 
    \definecolor{charcoal}{RGB}{70, 75, 80}       

\begin{tikzpicture}[
    node distance=2cm and 1.5cm,
    entity/.style={rectangle, draw=charcoal, thick, minimum width=2.2cm, minimum height=1cm, text=white},
    arrow/.style={-Stealth, thick, draw=charcoal},
    transship/.style={Stealth-Stealth, dashed, thick, draw=charcoal}
]
    \node[entity, fill=bhdeepblue] (P1) {Producer ($1$)};
    
    \node[entity, below left=1.5cm and 1.2cm of P1, fill=bhmidblue] (RS1) {Reseller ($2$)};
    \node[entity, below right=1.5cm and 1.2cm of P1, fill=bhmidblue] (RS2) {Reseller ($3$)};
    
    \node[entity, below=4cm of P1, fill=bhlightblue] (RT2) {Retailer ($5$)};
    \node[entity, left=1cm of RT2, fill=bhlightblue] (RT1) {Retailer ($4$)};
    \node[entity, right=1cm of RT2, fill=bhlightblue] (RT3) {Retailer ($6$)};
    
    \draw[arrow] (P1) -- node[above left, font=\small, text=black] {$p_{12}$} (RS1);
    \draw[arrow] (P1) -- node[above right, font=\small, text=black] {$p_{13}$} (RS2);
    \draw[arrow] (RS1) -- node[left, font=\small, pos=0.7, text=black] {$p_{24}$} (RT1);
    \draw[arrow] (RS1) -- node[right, font=\small, pos=0.4, text=black] {$p_{25}$} (RT2);
    \draw[arrow] (RS2) -- node[left, font=\small, pos=0.4, text=black] {$p_{35}$} (RT2);
    \draw[arrow] (RS2) -- node[right, font=\small, pos=0.7, text=black] {$p_{36}$} (RT3);

    \draw[transship] (RT1.east) -- node[above, font=\scriptsize, text=black] {$p_{45}, p_{54}$} (RT2.west);
    \draw[transship] (RT2.east) -- node[above, font=\scriptsize, text=black] {$p_{56}, p_{65}$} (RT3.west);
    
    \draw[dashed, ->, draw=charcoal] (RT1.south) -- ++(0,-0.8) node[below, font=\small, text=black] {$v_4$};
    \draw[dashed, ->, draw=charcoal] (RT2.south) -- ++(0,-0.8) node[below, font=\small, text=black] {$v_5$};
    \draw[dashed, ->, draw=charcoal] (RT3.south) -- ++(0,-0.8) node[below, font=\small, text=black] {$v_6$};
    
    \node[right=of P1, xshift=1.5cm, align=left, font=\small, draw=charcoal, dashed, inner sep=5pt, text=black] {
        \textbf{Model Variables:}\\
        $q_i$: Production Quantity\\
        $C_i$: Capacity\\
        $p_{ij}$: Flow Proportions\\
        $v_i$: Avg External Demand\\
        $\alpha_i$: $v_i + z_i\sigma_i$
    };
\end{tikzpicture}
\caption{Example of a Supply Chain Network.}
\label{fig:supply_chain_network}
\end{figure}

Nodes can take different roles typical of a traditional description of a Supply Chain Network such as producers, resellers and retailers. These roles can be totally flexible and nodes can satisfy the demand of other nodes as well as external demands. In the remainder of the section we describe the static and dynamic versions of the model.

\subsection{Static Model}
In this subsection we consider the static version of the model. The intuition of this version can be extended to the dynamic one and under standard conditions the behavior of the dynamic model converges to the static one. We present the building blocks of the model and the intuition behind it.

To start, consider a set of businesses producing or procuring a certain product and selling it to many different customers (potentially other businesses), all participants constitute a network with $N$ nodes. That is, the set of nodes is $\{1,\dots,N\}$. Each node $i=1,\dots,N$ maximizes their expected profit function $\ex[\pi_i]$. This expected value is taken under the probability distribution of their demand $D_i$. For example, when the random demand $D_i$ follows a normal distribution $\mathcal{N}(\mu_i,\sigma_i^2)$ the quantity that maximizes the profit of the node is given by:
\begin{equation}
  q_i^{NV}=\underbrace{\mu_i}_{\text{Average Demand}}+\underbrace{z_i\sigma_i}_{\text{Safety Stock}},
\end{equation}
Where $\mu_i$ is the average demand, $\sigma_i$ is the demand standard deviation and $z_i=\Phi^{-1}(\rho_i)$ is the optimal factor in the Newsvendor model, where $\rho_i$ is the critical ratio given by the profit structure in the objective function, for example $\rho_i=\frac{p_i-c_i}{p_i}$ when the profit function is defined as $\pi_i=(D_i\wedge q_i)p_i-q_ic_i$, where $\wedge$ denotes the minimum operator , $\Phi^{-1}$ is the inverse function of the Standard Normal $\mathcal{N}(0,1)$ random variable, $p_i,c_i$ are the selling price and cost of node $i$. This quantity $q_i^{NV}$ is known as the optimal quantity in the Newsvendor model. Remarkably, this model reflects a very natural idea that the optimal inventory level is the average demand plus a safety stock that depends on the volatility of the demand and the cost structure of the business (e.g. when losing a sale is \emph{expensive}, the stock should be larger as opposed when the margins are tight and losing some sales is preferable to costly unsold stock). A deeper discussion on the intuition can be found in \cite{simchi1999designing} illustrating how practitioners widely use this principle, even when the Newsvendor logic is not explicitly used.

To model the average demand $\mu_i$, each node $i=1,\dots,N$ must satisfy some external average demand $v_i$ plus their internal average demand. As mentioned previously, the demand of a node is the production share of the children of that node. The average demand of node $i$, $\mu_i$ is represented as:
\begin{equation}
  \mu_i = \underbrace{{\textstyle\sum_{j}} p_{ij} q_j}_{\text{Internal Demand}} + \underbrace{v_i \vphantom{q_j}}_{\text{External Demand}},
\end{equation}
Where $q_j$ is the effective quantity produced by node $j$ and $p_{ij}\in (0,1)$ is the proportion of the demand of $j$ produced by node $i$ and $v_i$ is the external average demand.

The variance also propagates throughout the network. The nodes with external demand observe a variance equal to $w^2_i$. The external demands are possibly correlated. Likewise, because of the network structure, the variance that a node faces depends on the variances and the nodes it is communicated with. For now, let $\sigma_i$ be the variance of node $i$, where we will later present a closed form expression and analysis.

With the variance of each node $\sigma_i$ given, the production quantity $q_i$ of each node $i=1,\dots,N$ is governed by:
\begin{eqnarray}
  q_i=(\textstyle\sum_{j}p_{ij} q_j+v_i+z_i\sigma_i)\wedge C_i,
\end{eqnarray}
Where $C_i$ is the physical production capacity of the node $i=1,\dots,N$. The interpretation is that the production quantity is either the optimally hedged quantity that maximizes the profit $\textstyle\sum_{j}p_{ij} q_j+v_i+z_i\sigma_i$ or the capacity of the node $C_i$.

Rewriting this system into a linear program is useful to see the structure and intuition of the solution. Let $\alpha_i=v_i +z_i\sigma_i$, and express all quantities as row vectors as $q=(q_1,\dots,q_N)$, $\alpha=(\alpha_1,\dots,\alpha_N)$ and $C=(C_1,\dots,C_N)$. With these, we can write a Linear Program (LP) to find the optimal quantities as:
\begin{align}
  \label{eq:lp}
    &\max \quad q \mathbf{1}^\top \\
    &\text{s.t.} \quad (I-P)q^\intercal \le \alpha^\intercal, \nonumber\\
    &\quad\quad\quad \bm 0\le q\le C. \nonumber
\end{align}
Readers familiar with the Banking Network literature (see \cite{glasserman2016contagion} for a survey) will recognize the striking similarities between this model and the Inter-bank lending model, most notoriously the Eisenberg and Noe model (see \cite{eisenberg2001systemic}). In that literature, the equilibrium of the system was determined with two disjoint subsets of banks: one that satisfies their liabilities completely and a subset of banks that become insolvent and eventually default on their obligations, triggering other banks to become insolvent until the equilibrium is reached. In our model a similar mechanism takes place: A set of businesses produce to their optimal level implied by the Newsvendor model, while another set of business reaches their production capacity leaving stockout risks unhedged. Mathematically, it is the complementary situation from the interbank E-N model where producing to the hedged level (the first set of constraints) is the better outcome (which means default in the E-N model) while producing at capacity means risk is unhedged (which in the E-N model means satisfying fully their nominal liabilities). With these elements we can define a Supply Chain Network and the structure of its solution:
\begin{defn}[Supply Chain Network]
  A network with nodes $i=1,\dots,N$, external random demands $D_i$ (with mean $v_i$ and variance $w_i^2$), production capacities $C_i$, and internal (endogenous) average demand $\mu_i$ and variance $\sigma_i$. Where each node produces a quantity $q_i$ that is routed throughout the network with the routing matrix $P$, where the element $p_{ij}\in[0,1]$ denotes the proportion of the demand $j$ produced by node $i$ is called a \emph{Supply Chain Network}.
\end{defn}

For a Supply Chain Network, we describe the optimal production solution as follows:

\begin{thm}[Static Model Solution]
  \label{thm:static_solution}
 For a Supply Chain Network (SCN) with $N$ nodes, vectors $\alpha$ with elements $\alpha_i=v_i+z_i\sigma_i$ and $C$ denoting the production capacities. The solution of the linear program $\max q \mathbf{1}^\top$ subject to $(I-P)q^\intercal \le \alpha^\intercal,\bm 0\le q\le C$, is given by:
\begin{eqnarray}
  q^\intercal_U=C^\intercal_U,\,\,\,\, q^\intercal_\calh=(I_\calh-P_\calh)^{-1}(\alpha^\intercal_\calh+P_{\calh,U}C^\intercal_U).
\end{eqnarray}
Where $U\subseteq\{1,\dots,N\}$ is a subset of nodes that are producing at capacity, that is, $q_U=C_U$. While, the set $\calh=\{1,\dots,N\}\setminus U$ is the set of nodes that can hedge their production optimally.
\end{thm}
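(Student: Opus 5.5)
The approach is to read the LP \eqref{eq:lp} as a monotone fixed-point problem and use complementary slackness to split the nodes. I will assume, as is implicit in the model, that $P$ is nonnegative and substochastic with spectral radius $\rho(P)<1$ (each column sums to at most one with some leakage to external demand). Then $I-P$ is a nonsingular $M$-matrix, and so is every principal submatrix $I_\calh-P_\calh$, with $(I_\calh-P_\calh)^{-1}=\sum_{k\ge0}P_\calh^k\ge 0$. I will also assume $\alpha\ge 0$ (nonnegative safety-stock-adjusted demand), so the LP is feasible at $q=0$ and bounded by $C$.

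\textbf{Step 1 (greatest element).} I first show the feasible set has a componentwise maximum $q^*$, which is therefore the unique optimum of $\max q\mathbf 1^\top$. The map $T(q)_i=(\sum_j p_{ij}q_j+\alpha_i)\wedge C_i$ is monotone on the lattice $[0,C]$. If $q$ and $q'$ are feasible, then $(q\vee q')_i$ is feasible: say $q_i\ge q_i'$; then $q_i\le \alpha_i+\sum_j p_{ij}q_j\le\alpha_i+\sum_j p_{ij}(q\vee q')_j$. So the feasible set is closed under joins, and being compact it has a greatest element $q^*$. Since the objective is strictly increasing in each coordinate, $q^*$ is the unique LP solution. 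Alternatively, Tarski applied to $T$ gives the same $q^*$ as the greatest fixed point, obtained by iterating from $C$.

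\textbf{Step 2 (fixed-point characterization).} Next I show $q^*=T(q^*)$. If some coordinate had $q^*_i<\min(C_i,\alpha_i+\sum_j p_{ij}q^*_j)$, I could raise $q^*_i$ slightly. Raising $q_i$ only loosens the constraints of the other rows (since $p_{ki}\ge0$), and it keeps row $i$ feasible because $p_{ii}<1$. This contradicts maximality. Define $U=\{i:q^*_i=C_i\}$; where both terms in the minimum coincide, assign the node to $U$ by convention. Let $\calh$ be the complement. On $\calh$ the hedging constraint binds: $q^*_\calh=P_\calh q^*_\calh+P_{\calh,U}C_U+\alpha_\calh$ (as column vectors).

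\textbf{Step 3 (solve the block system).} By the $M$-matrix property, $I_\calh-P_\calh$ is invertible, which gives $q_\calh^\intercal=(I_\calh-P_\calh)^{-1}(\alpha_\calh^\intercal+P_{\calh,U}C_U^\intercal)$, and $q_U=C_U$ by definition. As an optional check, I can verify the LP dual: take multipliers $y_\calh=\mathbf 1^\top(I_\calh-P_\calh)^{-1}\ge0$ on the hedging rows of $\calh$, zero on $U$, and capacity multipliers on $U$ equal to the reduced costs. This confirms optimality via complementary slackness and matches the paper's ``dual LP'' description of the partition.

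\textbf{Main obstacle.} The delicate point is Steps 1–2: justifying that the LP optimum is a genuine fixed point of $T$. This hinges on $P\ge0$ and $\rho(P)<1$; without the latter, $I_\calh-P_\calh$ could be singular. I would state these as standing assumptions of an SCN. The remaining work is linear algebra.
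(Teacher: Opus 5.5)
Your proof is correct, and it takes a different route from the paper's.

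The paper starts from an optimal basic feasible solution and simply \emph{asserts} that every node strictly below capacity has a binding hedging constraint. It then solves the resulting block system and certifies optimality with the dual pair $y^*_\calh=\mathbf 1_\calh(I_\calh-P_\calh)^{-1}$, $w^*_U=\mathbf 1_U+y^*_\calh P_{\calh,U}$, via complementary slackness and strong duality.

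You instead show the feasible set is closed under componentwise maxima, extract its greatest element, and use a one-coordinate perturbation to show that element is a fixed point of $T$. This buys you several things:
\begin{itemize}
\item It supplies the step the paper takes for granted: nothing in the paper's argument rules out a node below capacity whose hedging row is slack, and your perturbation step does exactly that.
\item It delivers the uniqueness the paper claims but does not prove. Strong duality gives only optimality; uniqueness could be recovered there from the strictly positive multipliers $y^*_\calh\ge\mathbf 1$, $w^*_U\ge\mathbf 1$, but the paper never says so.
\item It identifies the LP optimum with the greatest fixed point of the clearing map, reached by iterating from $C$. That is the Eisenberg--Noe-style object the paper later relies on through the flow equation of Section~\ref{sec:dynamic}.
\item It shows $\rho(P)<1$ is needed only for the invertibility in your Step 3. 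Steps 1--2 use only $P\ge0$ and feasibility, so your ``main obstacle'' remark overstates where that hypothesis enters.
\end{itemize}

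What the paper's dual route buys in exchange is an explicit optimality certificate. Its shadow prices are the column sums of the network multiplier $(I_\calh-P_\calh)^{-1}$, which ties the theorem directly to the sensitivity analysis of Lemma~\ref{lem:sens}.

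One small correction: in Step 2, row $i$ stays feasible after a small increase of $q_i$ simply because it was slack, not because $p_{ii}<1$.
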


The intuition of the solution lies in the fact that for a Linear Program, out of the set of $2N$ constraints, only $N$ will be binding for a basic feasible solution. Therefore, some nodes will be producing capped at capacity, while the rest will be able to produce to an optimal level for each node individually which reflects accurately that each node in the set $\calh$ is hedging their individual risk optimally and maximizing their profit.

\subsection{Volatility propagation, sensitivity analysis and The Bullwhip Effect}

In this section we show how the variance propagates throughout the network and its connection to The Bullwhip Effect (see \cite{lee1997bullwhip}). In the previous section explaining the model we showed that each node produces a quantity proportional to their average and their variance times a safety factor, that is, $q_i^{NV}=\mu_i+z_i\sigma_i$.
The variance a single node experiences $\sigma_i$ depends on the variance contiguous nodes experience in the network plus the variance of the external demand. If all external demands are uncorrelated and normal the variance that a node experiences is their own external demand $w_i^2$ plus the variance caused from directly connected nodes, leading to an expression like:
\begin{eqnarray}
  \sigma_i^2=\textstyle\sum_j p_{ij}^2\sigma^2_j+w^2_i,
\end{eqnarray}
The squared terms of the weights $p_{ij}$ are due to the variance formula. While this only applies to the Normal and independent case, it highlights the structure of how the variance propagates throughout the network in an order proportional to the squared of the weights $p_{ij}$. Of course, the demands are not necessarily independent in which case the result is refined to the following one:

\begin{thm}[Variance Propagation]
  \label{thm:variance}
  Consider a Supply Chain Network in which demands propagate in ordering rounds, $D(s)=PD(s-1)+D_{\mathrm{ext}}(s)$, where the external demand shocks $D_{\mathrm{ext}}(s)$ are i.i.d.\ with covariance matrix $\Sigma_D$ and independent of the preceding state $D(s-1)$. The stationary covariance $\Sigma$ of the node demands, with elements $\sigma_{ij}$ denoting the covariance between nodes $i$ and $j$, satisfies the discrete time Lyapunov equation (see \cite{hammarling1991numerical}):
  \begin{eqnarray}
    \Sigma=P\Sigma P^\intercal+\Sigma_D,
  \end{eqnarray}
If, in addition, $\Sigma_D$ is diagonal and the demand deviations feeding any given node are uncorrelated across its suppliers (exact for arborescent inflow structures, an approximation otherwise), the variance of the nodes of the Supply Chain Network is given by the following expression:
\begin{eqnarray}
        (\sigma^2)^\intercal=M_\sigma(w^2)^\intercal.
\end{eqnarray}
Where $\sigma^2$ and $w^2$ are vectors defined as $(\sigma^2_1,\dots,\sigma^2_N)$ and $(w_1^2,\dots,w_N^2)$ respectively and $\odot$ is the element-wise matrix product. The matrix $M_\sigma:=(I-(P\odot P))^{-1}$ is called the \emph{Variance Multiplier matrix}.
\end{thm}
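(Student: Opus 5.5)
The first claim follows from the covariance of a linear recursion. Write $\Sigma(s)=\cov(D(s))$. Because $D_{\mathrm{ext}}(s)$ is independent of $D(s-1)$, the cross-covariance terms in $\cov(PD(s-1)+D_{\mathrm{ext}}(s))$ vanish, which gives
\[
\Sigma(s)=P\Sigma(s-1)P^\intercal+\Sigma_D .
\]
A stationary covariance is a fixed point of this map, so it satisfies $\Sigma=P\Sigma P^\intercal+\Sigma_D$. For existence and uniqueness I would assume $\varrho(P)<1$, the same spectral condition that makes $(I-P)^{-1}$ in Theorem~\ref{thm:static_solution} meaningful. Iterating then gives $\Sigma=\sum_{k\ge0}P^k\Sigma_D(P^\intercal)^k$. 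This series converges because $\|P^k\|$ decays geometrically. Uniqueness holds because the operator $X\mapsto PXP^\intercal$ has spectrum $\{\lambda_i\lambda_j\}$, and all of these lie inside the unit disk. I would also note that $\Sigma(s)\to\Sigma$ from any initial covariance.

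For the second claim, I would read off the diagonal entries of the Lyapunov equation:
\[
\sigma_i^2=\sum_{j,k}p_{ij}p_{ik}\sigma_{jk}+(\Sigma_D)_{ii}.
\]
With $\Sigma_D$ diagonal we have $(\Sigma_D)_{ii}=w_i^2$. The supplier-uncorrelation hypothesis says $\sigma_{jk}=0$ for distinct $j,k$ both feeding node $i$, i.e.\ with $p_{ij}p_{ik}\neq0$. Under it, every off-diagonal term in the sum drops and we get $\sigma_i^2=\sum_j p_{ij}^2\sigma_j^2+w_i^2$. In vector form this is $(I-P\odot P)(\sigma^2)^\intercal=(w^2)^\intercal$.

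It remains to invert $I-P\odot P$. Since $0\le p_{ij}^2\le p_{ij}$ entrywise, Perron--Frobenius monotonicity gives $\varrho(P\odot P)\le\varrho(P)<1$. Hence $I-P\odot P$ is a nonsingular $M$-matrix, and its inverse $M_\sigma=\sum_k (P\odot P)^k$ is a nonnegative Neumann series. This yields $(\sigma^2)^\intercal=M_\sigma (w^2)^\intercal$.

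For the arborescent remark, I would argue that when each node's suppliers lie in disjoint subtrees driven by independent external shocks, the stationary series representation gives $\sigma_{jk}=0$ for such pairs. So the approximation is exact there.

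I expect the main obstacle to be exactly this decorrelation step. In a general loopy network, the series $\sum_k P^k\Sigma_D(P^\intercal)^k$ produces nonzero cross-covariances between co-suppliers, so the reduction to a vector recursion holds only as an approximation. I would take care to state precisely which $\sigma_{jk}$ must vanish for the identity to hold.
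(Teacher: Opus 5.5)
Your proposal is correct and follows the same route as the paper's proof: independence removes the cross-covariance terms, so stationarity gives the Lyapunov equation; the diagonal entries under the decorrelation hypothesis then give $(I-P\odot P)(\sigma^2)^\intercal=(w^2)^\intercal$, which is inverted as a nonsingular $M$-matrix. Your version is slightly tighter than the paper's, which imposes stationarity without proving existence, sets all of $\Sigma$ diagonal where you only require the co-supplier covariances to vanish, and asserts $\rho(P\odot P)\le\rho(P)^2$ where your entrywise bound $P\odot P\le P$ already suffices.
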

In the last equation, the variance of the network in the independent case is equal to $M_\sigma(w^2)^\intercal$, where $M_\sigma$ is what we call the variance multiplier matrix showing exactly the magnitude of how a shock to the variance affects other nodes. We discuss these multiplier matrices in detail:

\textbf{Sensitivity Analysis:} Knowing how the optimal quantities change relative to shocks in the external average demand or volatility is imperative to quantify and measure how a change in the average demand or volatility in one node will affect other nodes. For example, for an oil company retailer in Asia selling oil to gas stations it's important to know how much a shock in the production of the middle east will affect them.

In the LP in Equation (\ref{eq:lp}) sensitivities are usually taken with respect to the right side of the constraints, in this case to the elements $\alpha_i$ containing both the average demand and the safety stock, that is, $\alpha_i=v_i+\sigma_iz_i$ and also the capacities $C_i$. The sensitivities of the solution found in Theorem \ref{thm:static_solution} with respect to changes in this quantities are given by:
\begin{align}
\frac{\partial q_\calh}{\partial \alpha_i}&=(I_\calh-P_\calh)^{-1}e^\intercal_i \quad\text{for } i\in\calh,\\
\frac{\partial q_\calh}{\partial C_j}&=(I_\calh-P_\calh)^{-1}P_{\calh,U}e^\intercal_j \quad\text{for } j\in U.
\end{align} 

Where $e_i$ is a basis vector with entry $i$ equal to 1 and the rest equal to zero. In both sensitivities, we can see the role of the matrix $(I_\calh-P_\calh)^{-1}$. This matrix is commonly called the \emph{network multiplier} matrix and denotes exactly the magnitude of how a change in the right-hand side of a constraint (either increase in the demand or in capacity) ripples throughout the network. Intuitively, the inverse of a matrix with spectral radius less than 1 is equal to $I+P+P^2+P^3+\cdots=(I-P)^{-1}$ representing all possible paths a shock can take immediately (the identity), in one step (the term $P$), in two steps (the term $P^2$) and so on. The elements in this network multiplier matrix reflect exactly the multiple a shock of a change in node $i$ affects node $j$. For example, the sensitivity $(I_\calh-P_\calh)^{-1}e^\intercal_i$ is the $i$-th column of $(I_\calh-P_\calh)^{-1}$. Likewise, for a change in the capacities, the multipliers transition to the nodes in $U$ by $P_{U,\calh}$. Finally, by the chain rule, it is straightforward to calculate the effect of a change in the demand as $\frac{\partial q_\calh}{\partial v_i}=\frac{\partial q_\calh}{\partial \alpha_i}\frac{\partial \alpha_i}{\partial v_i}$ and $\frac{\partial \alpha_i}{\partial v_i}=1$ recalling $\alpha_i=v_i+\sigma_iz_i$. Therefore, $\frac{\partial q_\calh}{\partial v_i}=(I_\calh-P_\calh)^{-1}e^\intercal_i$ is the same sensitivity as before.

\textbf {Sensitivities with respect to the volatility:} To calculate the sensitivities with respect to the demand we use the result in Theorem \ref{thm:variance} where the variance is expressed as:
\begin{equation}
    (\sigma^2)^\intercal=M_\sigma(w^2)^\intercal.
\end{equation}
Recalling the matrix $M_\sigma=(I-(P\odot P))^{-1}$ is the multiplier of the variance. Similar to the previous case, this shows how the variance of the external demand $w^2$ propagates inside the network. Taking element-wise square root in both sides yields the equivalent expression $\sigma^\intercal=\sqrt{M_\sigma(w^2)^\intercal}$. The change in the variance of nodes by a change in the external demand standard deviation $w_j$ in node $j$ is given by:
\begin{equation}
    \frac{\partial \sigma^\intercal}{\partial w_j}=\text{diag}\left(1/\sqrt{M_\sigma(w^2)^\intercal}\right)m_jw_j,
\end{equation}
Where $m_j$ is a column vector of $M_\sigma$. Then, we can finally express the sensitivity of the demand with respect to changes in the external variance. We have by the chain rule that $\frac{\partial q_\calh}{\partial w_i}=\frac{\partial q_\calh}{\partial \alpha_i}\frac{\partial \alpha_i}{\partial w_i}$ and in turn $\frac{\partial \alpha_i}{\partial w_i}=z_i\frac{\partial \sigma_i}{\partial w_i}=z_i\frac{m_{ii}}{\sqrt{M_\sigma(w^2)^\intercal}_i}w_i$; retaining this dominant self-feedback term (the cross terms are displayed in the proof) gives:
\begin{equation}
    \frac{\partial q_\calh^\intercal}{\partial w_i}\approx(I_\calh-P_\calh)^{-1}e^\intercal_i\left[\frac{m_{ii}}{\sqrt{M_\sigma(w^2)^\intercal}_i}z_iw_i\right].
\end{equation}
The intuition is quite interesting: on one hand there is a network multiplier effect as before, but there is a further variance multiplier effect (in square brackets) that further exacerbates a shock to the standard deviation of the external demand. Note that the shock is also proportional to the current standard deviation $w_i$, accentuating shocks even further. 
We summarize these sensitivities in the following Lemma:

\begin{lem}[SCN sensitivities]
  \label{lem:sens}
  In a Supply Chain Network, the optimal quantities $q^*$ have the following sensitivities (where $m_j$ is the $j$-th column vector of $M_\sigma$):
  \begin{align}
    \frac{\partial q_\calh}{\partial v_i}&=(I_\calh-P_\calh)^{-1}e^\intercal_i \quad\text{for } i\in\calh,\\
  \frac{\partial q_\calh}{\partial C_j}&=(I_\calh-P_\calh)^{-1}P_{\calh,U}e^\intercal_j \quad\text{for } j\in U,\\
  \frac{\partial q_\calh^\intercal}{\partial w_i}&\approx(I_\calh-P_\calh)^{-1}e^\intercal_i\left[\frac{m_{ii}}{\sqrt{M_\sigma(w^2)^\intercal}_i}z_iw_i\right]\quad\text{for } i\in\calh.
  \end{align}
\end{lem}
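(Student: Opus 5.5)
The plan is to differentiate the closed-form solution of Theorem~\ref{thm:static_solution} while the active partition $(U,\calh)$ is held fixed. First I will argue that the partition is locally constant. At a nondegenerate basic optimal solution of the LP~(\ref{eq:lp}), a small perturbation of the right-hand side $(\alpha,C)$ keeps the same basis optimal. This follows from standard LP sensitivity: the optimal basis stays primal feasible in a neighbourhood, and dual feasibility does not depend on the right-hand side. So on that neighbourhood $q_U=C_U$ and
\[
q^\intercal_\calh=(I_\calh-P_\calh)^{-1}(\alpha^\intercal_\calh+P_{\calh,U}C^\intercal_U)
\]
hold identically, and $q_\calh$ is an affine function of $(\alpha_\calh,C_U)$. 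I will also note that $I_\calh-P_\calh$ is invertible. The theorem already uses this inverse, and it holds whenever $P_\calh$ has spectral radius below one (substochastic routing), so that $(I_\calh-P_\calh)^{-1}=\sum_k P_\calh^k\ge 0$.

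The first two sensitivities then follow directly. Differentiating the affine map in $\alpha_i$ for $i\in\calh$ gives the $i$-th column $(I_\calh-P_\calh)^{-1}e_i^\intercal$. Since $\alpha_i=v_i+z_i\sigma_i$, and $\sigma_i$ depends only on $w$ through Theorem~\ref{thm:variance} and not on $v$, we have $\partial\alpha_i/\partial v_i=1$ and $\partial\alpha_k/\partial v_i=0$ for $k\ne i$. The chain rule therefore gives the first line. Differentiating in $C_j$ for $j\in U$ gives $(I_\calh-P_\calh)^{-1}P_{\calh,U}e_j^\intercal$ directly.

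For the volatility sensitivity, I will use $(\sigma^2)^\intercal=M_\sigma(w^2)^\intercal$ from Theorem~\ref{thm:variance}. Component $k$ reads $\sigma_k^2=\sum_j (M_\sigma)_{kj}w_j^2$, so
\[
\frac{\partial\sigma_k}{\partial w_i}=\frac{(M_\sigma)_{ki}\,w_i}{\sqrt{M_\sigma(w^2)^\intercal}_k}.
\]
Applying the chain rule through every $\alpha_k$, $k\in\calh$, gives the exact expression
\[
\frac{\partial q_\calh^\intercal}{\partial w_i}=(I_\calh-P_\calh)^{-1}\,\mathrm{diag}(z_\calh)\,\frac{\partial\sigma_\calh^\intercal}{\partial w_i}=\sum_{k\in\calh}(I_\calh-P_\calh)^{-1}e_k^\intercal\, z_k\frac{(M_\sigma)_{ki}w_i}{\sqrt{M_\sigma(w^2)^\intercal}_k}.
\]
The $k=i$ summand is exactly the stated right-hand side. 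The cross terms $k\ne i$ are what the "$\approx$" discards.

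The main obstacle is justifying that approximation. The plan is to use the Neumann expansion $M_\sigma=\sum_n (P\odot P)^n$. This gives $(M_\sigma)_{ii}\ge1$, while for $k\ne i$ each off-diagonal entry $(M_\sigma)_{ki}$ is a sum over paths of products of squared routing weights $p^2\le p$, starting from at least one squared weight. These entries are therefore second order in the routing proportions relative to the diagonal. I will state the approximation as holding up to terms of order $\max_{k\ne i}(M_\sigma)_{ki}$, and note that it is exact when $P\odot P$ has no path from $i$ to other nodes of $\calh$. A secondary caveat is the degenerate case where some node sits exactly at $q_k=C_k$ with the hedging constraint also tight. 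There the partition can switch and only one-sided derivatives exist, so I will state the lemma for nondegenerate points.
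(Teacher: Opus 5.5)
Your derivation follows the paper's proof. You hold $(\calh,U)$ fixed, differentiate $q_\calh^\intercal=(I_\calh-P_\calh)^{-1}(\alpha_\calh^\intercal+P_{\calh,U}C_U^\intercal)$, and push $w_i$ through $\sigma_k=\sqrt{[M_\sigma(w^2)^\intercal]_k}$. This gives the exact sum over $k\in\calh$, of which you keep the $k=i$ summand. Your LP-basis argument adds something the paper omits: it shows the partition is locally constant at nondegenerate points, which is why differentiating the closed form is legitimate. That part is sound.

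The step that fails is your justification of the $\approx$ in the third line. It is true that $(M_\sigma)_{ki}$, $k\neq i$, starts at second order in the routing weights. But those weights are not small in this model; single sourcing gives $p_{ij}=1$. So nothing makes the dropped terms small relative to the kept one. The paper's own rare-earth example shows this. There $(M_\sigma)_{12}=1=(M_\sigma)_{22}$ and $\sigma_1=\sigma_2=10$, so the discarded $k=1$ term contributes $z_1 w_2/\sigma_1=1.645$, exactly as much as the retained term. The exact sensitivity is therefore $\partial q_1/\partial w_2=z_1+z_2=3.29$, not $1.645$, and an error statement "up to terms of order $\max_{k\neq i}(M_\sigma)_{ki}$" carries no smallness.

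What you can prove instead is one-sided, and it is what the paper states and what the proof of Theorem~\ref{thm:bullwhip} actually uses. Since $(I_\calh-P_\calh)^{-1}\ge 0$, $(M_\sigma)_{ki}\ge 0$ and $w_i\ge 0$, every dropped summand is componentwise nonnegative provided $z_k\ge 0$. The displayed expression is then a componentwise lower bound on the exact derivative. Equality holds iff $z_k(M_\sigma)_{ki}=0$ for every $k\in\calh\setminus\{i\}$, i.e.\ no other hedged node with a nonzero safety factor supplies $i$ directly or indirectly.

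Note the orientation here. $(M_\sigma)_{ki}>0$ means a path from $k$ into $i$ when $P\odot P$ is read row to column. Your phrase ``no path from $i$ to other nodes'' states the reverse. Replace the order-of-magnitude claim with this sign argument, and state the hypothesis $z_k\ge 0$ (critical ratios at least $1/2$) explicitly, since without it the cross terms can have either sign.
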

The third relation retains the dominant self-feedback (diagonal) term of $\partial \sigma^\intercal/\partial w_i$; the exact expression carries the additional nonnegative cross terms $(m_i)_k$, $k \neq i$, displayed in the proof, so all lower bounds derived from it hold a fortiori.

We illustrate these concepts with an example:
\begin{exm}[{\bf Rare earth supply chain}]
    A supply chain network composed of a producer (node 1) and a seller/country (node 2). The external average demand $v_2=100$ tons per year, the external standard deviation of the demand $w_2=10$ tons. The seller buys all of its product from the producer, that is $p_{12}=1$, the routing matrix is:
    \begin{equation*}
        P=\begin{pmatrix}
            0 & 1 \\
            0 & 0
        \end{pmatrix},
    \end{equation*}
    With a safety level $z_1=z_2=1.645$ what are the optimal production quantities in this network? While maintaining the same average demand, how much an increase of 1 ton of the standard deviation of the demand increases the quantities for the producer?
    
    \noindent For the first question, we have the following quantities:
    \begin{align*}
        q_2&=v_2+z_2w_2=100+1.645(10)=116.45\\
        q_1&=p_{12}q_2+z_1\sqrt{p_{12}^2\sigma^2_2}=116.45+1.645(10)=132.9
    \end{align*}
    Even when the network only has 2 nodes, the producer has to produce way above node 2 to be optimally hedged against the demand fluctuation of the seller.
    
    \noindent For the second question, we make use of the network multiplier matrix:
    \begin{equation*}
        (I-P)=\begin{pmatrix}
            1 & -1 \\
            0 & 1
        \end{pmatrix}\Rightarrow
        (I-P)^{-1}=\begin{pmatrix}
            1 & 1 \\
            0 & 1
        \end{pmatrix},
    \end{equation*}
    Also note that $P=P\odot P$, then the network multiplier matrix $(I-P)^{-1}$ and the variance multiplier $M_\sigma$ are the same. For node 1 we have that the sensitivity is:
    \begin{equation}
        \frac{\partial q_1}{\partial w_2} = [(I - P)^{-1}]_{1,2} \times \left[ \frac{m_{22}}{\sigma_2} z_2 w_2 \right]=1\times\left[\frac{1}{10}1.645(10)\right]=1.645
    \end{equation}
    \noindent Then, an increase of the demand volatility by 1 ton increases production by 1.645 tons in the producer node. In this case $\sigma_2 = w_2$, so the sensitivity simplifies.
\end{exm}

{\bf A commentary on the Bullwhip effect:} The bullwhip effect (see \cite{simchi1999designing}) is known as the empirical phenomenon when an increase of the demand ripples with increasing magnitude across upper echelons of the supply chain. The sensitivities calculated in this section precisely quantify the effect of an increase of either an external average demand $v_i$ or an increase in the external demand volatility $w_i$. The mechanism that quantitatively defines the magnitude of the \emph{rippling} are the network multiplier $(I-P_\calh)^{-1}$ and the variance multiplier $M_\sigma$ matrices. Nonetheless, this only conveys half the story. Here, we present an alternative quantitative explanation on why the bullwhip effect is not just a coordination or information problem (note that in this model all nodes have perfect information), but in fact, the bullwhip effect is a topological property of the network.

An alternative way to visualize the optimal quantity that a node produces to maximize their profit in the independent case is:
\begin{eqnarray}
  q_i^*=v_i+\textstyle\sum_{j\neq i}p_{ij}q^*_j+z_i\sqrt{w^2_i+\textstyle\sum_{j\neq i}p^2_{ij}\sigma^2_j},
\end{eqnarray}
The beauty of this idea is that in the context of the fluid network, the variance is \emph{flattened} to equivalent units as the mean. One can think of a typical mean-variance optimization problem where the Lagrangian maximizes the expected production quantity subject to some acceptable risk level, the variable $z_i$ is precisely the Lagrangian multiplier quantifying the risk aversion between lost sales or overproduction (which is exactly how the Newsvendor is solved).

What is gained is that the \emph{fluid} traveling is both average demand plus the optimally risk-weighted standard deviation (maximizing the profit). In the traditional analysis of fluid networks, the variance is treated exogenously requiring heavy machinery. Combining the Newsvendor with the Fluid Network frameworks leads to a very interesting interpretation: In a Supply Chain Network, the fluid is the average demand each node faces plus the \emph{sloshing} required for this fluid to travel. See Figure \ref{fig:two_node_sloshing_clean}.

\begin{figure}[htb]
    \centering
    \definecolor{bhdeepblue}{RGB}{12, 102, 148}   
    \definecolor{bhlightblue}{RGB}{122, 206, 240} 
    \definecolor{bhmidblue}{RGB}{62, 153, 209}    
    \definecolor{bhorange}{RGB}{235, 94, 40}      
    \definecolor{charcoal}{RGB}{70, 75, 80}       

    \begin{tikzpicture}[
        stage/.style={rectangle, draw=charcoal, thick, minimum width=2.8cm, minimum height=1.6cm, align=center, text=white},
        arrow/.style={-Stealth, thick},
        meanfluid/.style={fill=bhlightblue!40},               
        wavefluid/.style={fill=bhmidblue, opacity=0.85},     
        pipe/.style={draw=charcoal!30, thick, fill=charcoal!5}
    ]
        \coordinate (P_center) at (0,0);
        \coordinate (S_center) at (7.8,0);
        
        \coordinate (P_east) at (1.4,0);
        \coordinate (S_west) at (6.4,0);

        \draw[pipe] ($(P_east)+(0, 0.5)$) -- ($(S_west)+(0, 0.5)$) 
                    -- ($(S_west)+(0,-0.5)$) -- ($(P_east)+(0,-0.5)$) -- cycle;

        \draw[meanfluid] (1.4,-0.5) rectangle (6.4,-0.1);

        \draw[wavefluid] 
            (6.4,-0.1) 
            sin (5.2, 0.05) cos (4.2, -0.05)
            sin (2.8, 0.2)  cos (1.4, 0.45) 
            -- (1.4,-0.1) -- cycle;

        \node[stage, fill=bhdeepblue] (P) at (P_center) {
            \textbf{Producer} \\ (Node $1$) \\[4pt] 
            $q_1^* = \mu_1 + z_1\sigma_1$
        };
        
        \node[stage, fill=bhlightblue, text=black] (S) at (S_center) {
            \textbf{Seller} \\ (Node $2$) \\[4pt] 
            $q_2^* = v_2 + z_2w_2$
        };

        \node[below=0.35cm of P.south, font=\scriptsize\bfseries\itshape, color=bhdeepblue] {Peak Slosh (High $z_1\sigma_1$)};
        \node[below=0.35cm of S.south, font=\scriptsize\bfseries\itshape, color=bhdeepblue!80!black] {Stable Base Volatility};
        
        \node[font=\scriptsize\bfseries\itshape, color=bhdeepblue!90] at (3.9,-0.3) {Mean Flow};
        \node[font=\scriptsize\bfseries\itshape, color=bhdeepblue] at (3.9,0.2) {Amplifying Surge};

        \draw[arrow, bhdeepblue, line width=1.4pt] ($(P.south east)+(0.4,-0.4)$) -- 
            node[below=0.1cm, font=\small\bfseries, text=bhdeepblue] {Product Flow ($p_{12} = 1$)} ($(S.south west)+(-0.4,-0.4)$);

        \draw[arrow, bhorange, line width=1.4pt] ($(S.north west)+(-0.4,0.5)$) -- 
            node[above=0.1cm, font=\small\bfseries, align=center, text=bhorange] {Upstream Risk \& Order Ripple \\ $\sigma_1 = \sqrt{p_{12}^2\sigma_2^2 + w_1^2}$} ($(P.north east)+(0.4,0.5)$);

        \draw[dashed, <-, draw=charcoal] (S.east) -- ++(0.8,0) 
            node[right, font=\small, align=left, text=black] {\textbf{Market Shock}\\Mean: $v_2$\\Variance: $w_2^2$};

    \end{tikzpicture}
    \caption{Fluid visualization as average demand plus sloshing. The stable base level tracks mean demand, while the risk-weighted variance acts as an energetic fluid overlay that sloshes with expanding crest heights ($z\sigma$) as order queues echo backward.}
    \label{fig:two_node_sloshing_clean}
\end{figure}

The conclusion is that the bullwhip effect is not just an information or coordination problem, but rather an inherent topological property of Supply Chain Networks. We summarize the result in the following Theorem:

\begin{thm}[The Bullwhip Effect]
  \label{thm:bullwhip}
  In a Supply Chain Network where nodes maximize their own profit and there is non-degenerate random demand, with perfect information and a substochastic routing matrix $P$ with $\rho(P)<1$ (so that $I-P$ is a nonsingular $M$-matrix). There is always a bullwhip effect with magnitude equal to the sensitivities in Lemma \ref{lem:sens}.
\end{thm}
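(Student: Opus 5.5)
The argument works in two layers. The first concerns the mean, the second the safety stock. In each layer I use the $M$-matrix structure to show that the relevant multiplier matrices are entrywise nonnegative and have diagonal entries at least one. The statement ``there is always a bullwhip effect'' is then read as the amplification floor $H_{ii}\ge 1$ quoted in the literature review, together with strict amplification upstream along every supply path.

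\textbf{Step 1 (network multiplier).} Since $P$ is substochastic with $\rho(P)<1$, every principal submatrix $P_\calh$ is nonnegative and satisfies $\rho(P_\calh)\le\rho(P)<1$ by Perron--Frobenius monotonicity. Hence $I_\calh-P_\calh$ is a nonsingular $M$-matrix and has the convergent Neumann series
\begin{equation*}
H:=(I_\calh-P_\calh)^{-1}=\textstyle\sum_{k\ge0}P_\calh^k .
\end{equation*}
Every term is entrywise nonnegative, and the $k=0$ term is $I$, so $H\ge I$ entrywise and in particular $H_{ii}\ge1$. Moreover $H_{ji}>0$ whenever there is a directed path from $i$ to $j$ in $\calh$. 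By Lemma \ref{lem:sens}, $\partial q_\calh/\partial v_i=He_i^\intercal$. A unit external mean shock at $i$ therefore raises $q_i$ by at least one unit, and it raises every upstream supplier $j$ strictly. When $q_i$ feeds itself through loops, that is when $(P_\calh^k)_{ii}>0$ for some $k\ge1$, the amplification at $i$ is strictly greater than one.

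\textbf{Step 2 (variance multiplier).} I use the same argument for $P\odot P$. Since $0\le P\odot P\le P$ entrywise, monotonicity of the spectral radius gives $\rho(P\odot P)\le\rho(P)<1$. It follows that $M_\sigma=\sum_k (P\odot P)^k\ge I$, with $m_{ii}\ge1$ and all entries nonnegative. By Theorem \ref{thm:variance}, $\sigma_i^2=\sum_k (M_\sigma)_{ik}w_k^2\ge w_i^2$. Each node's demand volatility is therefore at least its external volatility, and it is strictly larger for any node that has a downstream customer with $w_k>0$. The non-degeneracy assumption ($w\neq0$) is exactly what makes the square root in the third sensitivity of Lemma \ref{lem:sens} well defined and positive.

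\textbf{Step 3 (combine).} By Lemma \ref{lem:sens}, the response to $w_i$ is $He_i^\intercal$ times the bracket $m_{ii}z_iw_i/\sigma_i$. Every factor is nonnegative when $z_i\ge0$, that is when the critical ratio is at least $1/2$. The omitted cross terms $(m_i)_k$ are also nonnegative, so the approximate expression is a lower bound for the exact one. Chaining Steps 1 and 2 shows that every upstream node's production responds with the same sign as the downstream shock and with magnitude at least the multiplier-weighted amount. The amplification factor is bounded below by one at the shocked node, and it is positive at every node reachable backward through $P_\calh$.

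None of this depends on forecasts, lead times or information lags, since all nodes observe the true parameters. The amplification is therefore purely a consequence of the spectral structure of $P$.

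\textbf{Main obstacle.} The hard part is making ``bullwhip effect'' a precise inequality, because the statement itself is informal. Two points need care.

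First, the sign of $z_i$ must be handled. When $\rho_i<1/2$, the volatility channel dampens rather than amplifies. I would restrict the volatility claim to $z_i\ge0$ and rely on the mean channel, where $H\ge I$, for the unconditional statement.

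Second, the approximation in Lemma \ref{lem:sens} must be justified as a lower bound. I would first differentiate $\sigma=\sqrt{M_\sigma (w^2)^\intercal}$ exactly, which gives $\partial\sigma_k/\partial w_i=(M_\sigma)_{ki}w_i/\sigma_k\ge0$. Feeding this through $H$ with $z\ge0$ then shows that the exact sensitivity dominates the displayed one entrywise.
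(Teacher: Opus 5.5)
Your proposal is correct and follows essentially the paper's argument: the Neumann series gives $H=(I_\calh-P_\calh)^{-1}\ge I$, so $H_{ii}\ge1$; the same reasoning on $P\odot P$ gives $m_{ii}\ge1$; and positivity of $m_{ii}z_iw_i/\sigma_i$ yields a strictly positive self-sensitivity, with the nonnegative cross terms making the Lemma's approximation a lower bound. Your added mean channel $\partial q_\calh/\partial v_i=He_i^\intercal$ and the explicit sign condition on $z$ are mild refinements of the paper's blanket assumption $z_i>0$. Two small precisions: strict positivity of the volatility response needs $w_i>0$ and $z_i>0$ at the shocked node, not merely $w\neq0$; and upstream you establish $H_{ji}>0$, i.e.\ a strictly positive response rather than amplification above one, so ``strict amplification upstream'' should be read in that weaker sense.
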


\begin{rem}[Relation to the Chen--Drezner--Ryan--Simchi-Levi bound]
\label{rem:cdrs}
The floor $H_{ii}\ge1$ can be read as the structural core of the bullwhip lower bound of \cite{chen2000quantifying}. In their serial retailer--manufacturer model with order-up-to policies, moving-average forecasting over $p$ observations, and replenishment lead time $L$, order variance satisfies $\var(q)/\var(D)\ge 1+2L/p+2L^2/p^2$ for i.i.d.\ demand, with the excess over unity generated entirely by the interaction of forecast revision with a positive lead time: setting $L=0$ collapses their bound to exactly one. Theorem \ref{thm:bullwhip} identifies the complementary mechanism. With every temporal channel removed (no lead times, no forecasting, perfect information), the serial topology of their model contains no directed cycles, so $H_{ii}=1+\sum_{k\ge1}[P_\calh^k]_{ii}=1$ and the network multiplier recovers precisely their $L=0$ floor; the variance multiplier behaves identically, with $m_{ii}=1$ on acyclic inflow structures. When the routing topology contains directed cycles, however, $H_{ii}>1$ with no temporal dynamics whatsoever: topology alone forces strict amplification. The two results thus isolate independent sufficient conditions for a bullwhip effect, operating dynamics on trivial topology, and nontrivial topology under trivial dynamics, and jointly imply that no combination of information sharing, forecasting policy, or network redesign within a cyclic routing structure pushes the amplification ratio below one.
\end{rem}

In the following section we expand the model to a dynamic one to identify the timing of stockouts and other events of interest, laying the groundwork to incorporate strategic rebalancing and pricing dynamics when stockouts occur.

\section{Dynamic Model}
\label{sec:dynamic}
In practice, knowing only the new equilibrium quantities is important, but also the timing on how the network adapts to shocks and their magnitude. In this section we extend our model to a dynamic setting to answer questions related to the timing and magnitude of the \emph{Bullwhip effect} and other shocks. The standard way to incorporate a dynamic version, is to solve a so-called \emph{Skorokhod Problem} (see \cite{skorokhod1961stochastic}).

A remark on orientation is in order. In the static model orders propagate \emph{upstream}: the demand of node $i$ aggregates the orders of its customers through the rows of $P$. The dynamic model tracks realized \emph{deliveries}, which propagate \emph{downstream} through the columns of $P$: the throughput a node clears equals its external requirement plus the capacity-capped deliveries arriving from its suppliers. All vector equations in this and the following sections are therefore row-vector equations acting on $P$ from the right (equivalently, column-vector equations on $P^\intercal$); since $\rho(P^\intercal)=\rho(P)<1$, the $M$-matrix structure of Section \ref{sec:model} carries over unchanged.

Consider the evolution of the inventory if there are no capacity constraints, a so-called \emph{free inventory process}:
\begin{equation}
  X_i(t)=Z_i(0)+(\text{Processing Capacity}_i-\text{External Demand}_i-\text{Pass-Through Requirement}_i)t,
\end{equation}
Where $Z_i(0)$ is an initial level of inventory (not to be confused by the safety margin $z_i$). Call the expression in parenthesis $\theta_i$ showing the net-rate that the inventory changes. In matrix-form, the free inventory process can be written as:
\begin{equation}
X(t)=Z(0)+\theta t,
\end{equation}
In this case, $\theta = C-\alpha-(\lambda\wedge C)P$ as the maximum production capacity of a node is their capacity, that is, $\lambda\le C$, recalling $C=(C_1,\dots,C_N)$; here $[(\lambda\wedge C)P]_i=\sum_j(\lambda_j\wedge C_j)p_{ji}$ is the capacity-capped volume node $i$ must pass through to its downstream customers. $\theta_i$ denotes the rate the inventory increases if $\theta_i>0$ or decreases $\theta_j<0$, eventually reaching 0. Let $\lambda$ be the effective production rate of each node. Let $Z(t)$ be the actual inventory process. As the inventory cannot be negative, that is, $Z(t)\ge 0$, the Skorokhod problem trick is to create a function $Y(t)$ that cancels out when the inventory is negative. As this must happen for all the nodes we have the relation:
\begin{equation}
  Z(t)=X(t)+Y(t)-Y(t)P=X(t)+Y(t)(I-P),
\end{equation}
In this case, $Y(t)$ appears first as it is cancelling out when $X(t)$ is negative, but it also must account when other nodes have a shortfall that affects node $i$, therefore the need for the term $-Y(t)P$, or $-\sum_jY_j(t)p_{ji}$. This keeps track of the propagation when other nodes have shortfalls, which is a mathematical way of tracking the contagion between nodes. It must also be the case that when $Z(t)>0$, then $dY(t)=0$, because there is still enough inventory to satisfy incoming orders. Likewise, when $Z(t)=0$, then $dY(t)>0$, meaning that there is not enough production capacity to satisfy the demand. All the conditions can be summarized in the program:
\begin{align}
  &Z(t)=X(t)+Y(t)(I-P), \\
  &\text{s.t.} \quad Y(0)=0,\,\,\, \dot{Y}(t)\ge0,\,\,\, Z(t)\ge0,\,\,\, Z(t)\dot{Y}(t)=0. \nonumber
\end{align}
Where $\dot{Y}$ denotes the time derivative of $Y$, that is, $\dot{Y}(t)=dY(t)/dt$.

In the long term, it must be the case that the equilibrium of the dynamic model mirrors the structure of the static one. To see why, first consider the case where all the production capacities are large enough so that any production level is possible. In this case, the evolution of the system is given by the equation:
\begin{equation}
  \dot{Z}(t)=\dot{X}(t)+\dot{Y}(t)=\theta+\dot{Y}(t)(I-P),
\end{equation}
If $Z(t)>0$ ends in a positive level, then $dY(t)=0$. Likewise, if $Z(t)$ reaches an equilibrium point, then $dZ(t)=0$. Leading to the equation:
\begin{equation}
  0=\theta=\lambda(I-P)-\alpha \implies \lambda=\alpha(I-P)^{-1},
\end{equation}
Where $\lambda$ represents the effective production rate of each node. In the limit, this clearing vector has exactly the partition structure of Theorem \ref{thm:static_solution}, applied on the delivery side of the network (with $P$ acting from the right; see the orientation remark above). We denote it $\lambda^*$ and write $q^*:=\lambda^*\wedge C$ for the associated clearing quantities. In practice, we will see that because of their physical capacity, some nodes will be unable to produce their required quantity and they will be again partitioned into disjoint hedged $\calh$ and un-hedged $U$ nodes. In the case where nodes hit a capacity limit, this refines to the so-called \emph{flow equation}:
\begin{equation}
  \label{eq:flow}
  \lambda = \alpha+(\lambda\wedge C)P,
\end{equation}
Where $\wedge$ is the element-wise minimum operator. When \(\lambda\wedge C=\lambda\) the previous equation holds.

Following our previous analysis, at any point $t$ the nodes are divided into two disjoint sets of hedged $\calh$ and un-hedged $U$ sets. Then, the flow Equation \ref{eq:flow} refines to:
\begin{align}
  \lambda_\calh&=\alpha_\calh+\lambda_\calh P_\calh+C_U P_{U,\calh},\\
  \lambda_U&=\alpha_U+\lambda_\calh P_{\calh,U}+C_UP_U,
\end{align}
Solving yields the following flows at time $t$:
\begin{align}
  \lambda_\calh&=(\alpha_\calh+C_UP_{U,\calh})(I-P_\calh)^{-1},\\
  \lambda_U&=\alpha_U+(\alpha_\calh+C_UP_{U,\calh})(I-P_\calh)^{-1}P_{\calh,U}+C_UP_U,
\end{align}
Note that as time moves forward, the set of hedged nodes might decrease as a result of increased demand. The rates the inventory is accumulated are given by:
\begin{equation}
  \theta = C-\alpha-(\lambda\wedge C)P=C-\lambda
\end{equation}

\subsection{Sequential dynamics}
In principle, we have all the ingredients to describe the evolution over time of the supply chain network. The nodes start at $t=0$ with some inventory position $Z(0)\ge0$ with production rates $\lambda(t)$ dividing the nodes into disjoint sets $\calh(t)$ and $U(t)$. Suppose, the sets are initially known as well, that is $\calh(0)$ and $U(0)$. Then, we just need to describe the evolution of the system until the time $\tau$ when the sets change. When that time $\tau$ is reached we can just restart the clock and simulate the system again.

The evolution of the system is linear between time 0 and the time where the sets of nodes change. For all nodes already at capacity $Z_U(0)=0$ and $Z_\calh(0)>0$. We also have the following dynamics for the hedged nodes:
\begin{equation}
  Z_\calh(t)=Z_\calh(0)+\theta_\calh t-Y_U(t)P_{U,\calh}
\end{equation}
Where $Y_U(t)=(\lambda_U-C_U)t$ and $\theta_\calh=C_\calh-\lambda_\calh$; the term $-Y_U(t)P_{U,\calh}$ charges the unmet requirements of capacity-bound suppliers to the inventories of their downstream hedged customers. The system keeps evolving linearly according to these dynamics until the time $\tau$ where a node in the set $\calh$ cannot satisfy all their production obligations. This time is given by:
\begin{equation}
  \tau=\min_{j\in\calh,\theta_j<0}\left\{\frac{Z_j(0)}{-\theta_j}\right\},
\end{equation}
This follows from the fact that a node in $\calh$ with \(\theta_j<0\) implies that $C_j<\lambda_j$, meaning that even at maximum capacity they will eventually reach their production limit. This will happen when $Z_j(0)+\theta_jt=0$ or at $t=-Z_j(0)/\theta_j$. We summarize the exact simulation in Algorithm \ref{alg:sequential_dynamics}. In the implementation the drift is specialized to deficit-driven drawdowns: inventories decrease at the shortfall rate $\theta_j=\lambda_j-\alpha_j<0$ for stocked deficit nodes and are otherwise held constant, a conservative specialization in which surplus deliveries are re-exported rather than stored.

\begin{algorithm}[!htpb]
\caption{Sequential Dynamics Simulation for Supply Chain Networks}
\label{alg:sequential_dynamics}
\begin{algorithmic}[1]
\Require{Initial inventory vector $Z(0) \ge \bm 0$, routing matrix $P$, capacity vector $C$, parameters $\alpha = v + z\sigma$, total simulation time $T$.}
\Ensure{Inventory trajectory $Z(t)$ and effective production flows $\lambda(t)$ for $t \in [0, T]$.}

\State $t \gets 0$ \Comment{Initialize simulation clock}
\State $Z \gets Z(0)$ \Comment{Set initial inventory levels}

\While{$t < T$}
    \State Identify sets $\calh$ (hedged) and $U$ (unhedged) based on current capacity limits:
    \State $\quad \calh \gets \{i \in \{1,\dots,N\} \mid \lambda_i \le C_i\}$
    \State $\quad U \gets \{1,\dots,N\} \setminus \calh$
    
    \State Compute flows for hedged nodes: 
    \State $\quad \lambda_\calh \gets (\alpha_\calh + C_U P_{U,\calh})(I_\calh - P_\calh)^{-1}$
    \State Compute flows for unhedged nodes: 
    \State $\quad \lambda_U \gets \alpha_U + \lambda_\calh P_{\calh,U} + C_U P_U$
    
    \State Calculate drawdown rates: $\theta_j \gets (\lambda_j - \alpha_j)\,\ind\{\lambda_j < \alpha_j,\ Z_j > 0\}$, else $\theta_j \gets 0$
    
    \State $\tau \gets \infty$
    \For{each $j \in \calh$}
        \If{$\theta_j < 0$}
            \State $\tau \gets \min\left\{\tau, \frac{Z_j}{-\theta_j}\right\}$ \Comment{Time until node $j$ hits zero inventory}
        \EndIf
    \EndFor
    
    \If{$t + \tau > T$}
        \State $\tau \gets T - t$ \Comment{Cap the final step at total duration $T$}
    \EndIf
    
    \State Update inventory for $t \in [t, t+\tau]$: $Z \gets Z + \theta\,\tau$ \Comment{only stocked deficit nodes draw down}
    
    \State $t \gets t + \tau$ \Comment{Advance simulation clock}
    
    \If{$\tau = \infty$ or $t \ge T$}
        \State \textbf{break} \Comment{System has stabilized or time limit reached}
    \EndIf
\EndWhile
\end{algorithmic}
\end{algorithm}

\section{Strategic Pricing and Trade Relation Rebalancing}
\label{sec:pricing}
The model presented is attractive from a modeling point-of-view due to its tractability and efficient simulation. Nonetheless, in the real-world, when a node sees that their upstream supply decreases they will try to either increase their orders from their existing suppliers or look for new ones rather than passively wait out shocks to the demand. In this section, we will show that this behavior exacerbates the shocks in the network. Mathematically, we make the routing matrix dynamic, that is, $P(t)$ evolves over time, and we show the relation to pricing inside the network via the safety stock multipliers $z_i$. 

In practice, when there is a shock two natural behaviors occur at the same time: on one hand, every node that sees their supply reduced will try to rebalance their orders so that they can mitigate the effect of the initial reduction. On the other hand, a decrease of the overall supply will increase the prices and affect the demand.

\textbf{Routing Matrix Rebalancing:} To handle the first effect, the dynamic routing matrix $P(t)$ changes at the epochs $\tau_1,\tau_2,\dots$, when the inventory of a set of nodes becomes exhausted. At this time, the nodes restructure their trade proportions as:
\begin{eqnarray}
  \label{eq:rebalancing}
  p_{ij}(\tau_k)=\ind\{i\in\cala_k\}\;\varsigma_j\,
  \frac{p_{ij}(\tau_{k-1})}{\sum_{m\in\cala_k} p_{mj}(\tau_{k-1})},
  \qquad \varsigma_j:=\sum_{m=1}^N p_{mj}(0),
\end{eqnarray}
Where $\cala_k=\{j:Z_j(\tau_k)>0\}$ is the set of nodes with remaining stock. This rule redistributes the sourcing shares of exhausted suppliers proportionally among the surviving ones while holding each column's total sourcing mass fixed at its initial level $\varsigma_j$. Preserving the column masses --- in particular the external leakage $1-\varsigma_j$ --- is what sustains the substochasticity of $P(t)$ and hence the $M$-matrix property at every epoch (Appendix C.1).

\textbf{Price Elasticity Effects:} Likewise, with every failure epoch $\tau_k$ the delivered supply of the network is reduced. The market-relevant throughput is the \emph{delivered supply}: the clearing flow plus the reserve drawdown with which stocked deficit nodes continue to serve their demand,
\begin{eqnarray}
  \label{eq:served}
  Q(t)=\sum_{i\in\cals}\Big[\lambda_i(t)+\big(\alpha_i(t)-\lambda_i(t)\big)^{+}\ind\{Z_i(t)>0\}\Big],
\end{eqnarray}
where $\cals$ is the set of sovereign (non-transit) nodes, so that each physical unit is counted once at its point of final clearing. A node with $\theta_j<0$ and $Z_j>0$ still serves its demand in full from its buffer and stops contributing this cover exactly at its exhaustion epoch; consequently $p(t)$ below is continuous at the shock onset and increases piecewise at the epochs $\tau_k$. Let $\varepsilon>0$ and $\eta\ge0$ denote the short-run price elasticities of demand and supply. Clearing the demand curve $Q_d(p)=Q(0)(p/p_0)^{-\varepsilon}$ against the supply response $Q_s(p)\propto p^{\eta}$ yields the incidence relation
\begin{eqnarray}
  \label{eq:price}
  p(t)=p_0\left(\frac{Q(0)}{Q(t)}\right)^{1/(\varepsilon+\eta)},
\end{eqnarray}
where $Q(0)$ is the pre-shock equilibrium delivered supply (so that $p\equiv p_0$ absent a shock) and $\eta=0$ recovers the perfectly-inelastic supply stress case. Consistency with the same demand curve gives the external demand response
\begin{eqnarray}
  \label{eq:demand_change}
  v_i(t)=v_i(0)\left(\frac{p_0}{p(t)}\right)^{\!\varepsilon}.
\end{eqnarray}
The critical ratios respond through the node prices, which move with the index, $p_i(t)=p_i(0)\,p(t)/p_0$. We anchor the unit costs to the baseline safety level $\bar z$ of the static model, $c_i=p_i(0)(1-\Phi(\bar z))$, so that $z_i(p_0)=\bar z$ exactly and the enhanced model nests Algorithm \ref{alg:sequential_dynamics} at baseline prices:
\begin{eqnarray}
  \label{eq:safety}
  z_i(t)=\Phi^{-1}\!\big(\rho_i(t)\big)=\Phi^{-1}\!\Big(1-(1-\Phi(\bar z))\,\frac{p_0}{p(t)}\Big).
\end{eqnarray}
Finally, $\alpha_i(t)=\max\{0,\,v_i(t)+z_i(t)\sigma_i\}$, the floor mirroring the primal constraint $q\ge\bm0$. Within each inter-epoch interval the pair $(p,\lambda)$ is computed jointly as the fixed point of \eqref{eq:price} with $\lambda$ the clearing vector at $\alpha(p)$; existence follows from the argument in Appendix C.1.

These changes are reflected in Algorithm \ref{alg:enhanced_strategic_dynamics}. This modeling choice, allows a flexible paradigm to incorporate strategic rebalancing behavior in Supply Chain Networks. We illustrate the richness this modeling provides in the following section with a global oil supply chain numerical study.

\begin{algorithm}[!htpb]
\caption{Enhanced Sequential Dynamics with Strategic Pricing and Trade Rebalancing}
\label{alg:enhanced_strategic_dynamics}
\begin{algorithmic}[1]
\Require{Initial inventory $Z(0)\ge\bm 0$; initial routing matrix $P(0)$ with column masses $\varsigma_j$; capacities $C$; baseline demand $v(0)$; volatilities $\sigma$; baseline price $p_0$; baseline safety level $\bar z$; elasticities $\varepsilon,\eta$; price cap $\kappa$; sovereign set $\cals$; horizon $T$.}
\Ensure{Trajectories $Z(t)$, $p(t)$, $P(t)$, $\lambda(t)$ over $t\in[0,T]$.}
\State $t\gets0$;\quad $Z\gets Z(0)$;\quad $P\gets P(0)$;\quad $p\gets p_0$
\State $Q(0)\gets\sum_{i\in\cals}\big[\lambda^*_i+(\alpha_i(p_0)-\lambda^*_i)^{+}\ind\{Z_i(0)>0\}\big]$ \Comment{pre-shock delivered supply at $p_0$}
\While{$t<T$}
  \State Solve the epoch fixed point by damped iteration on $p\in[p_0/2,\,\kappa p_0]$:
  \State \quad $z(p)\gets\Phi^{-1}\!\big(1-(1-\Phi(\bar z))\,p_0/p\big)$;\;\; $v_i(p)\gets v_i(0)(p_0/p)^{\varepsilon}$;\;\; $\alpha_i(p)\gets\max\{0,\,v_i(p)+z(p)\sigma_i\}$
  \State \quad $\lambda(p)\gets$ clearing flows at $\alpha(p)$ via the partition $(\calh,U)$ of Algorithm \ref{alg:sequential_dynamics}
  \State \quad $Q(p)\gets\sum_{i\in\cals}\big[\lambda_i(p)+(\alpha_i(p)-\lambda_i(p))^{+}\ind\{Z_i>0\}\big]$;\;\; $p\gets p_0(Q(0)/Q(p))^{1/(\varepsilon+\eta)}$ (damped, projected)
  \State Fix $(\lambda,\alpha)$ at the converged $p$;\quad $\theta_j\gets(\lambda_j-\alpha_j)\ind\{\lambda_j<\alpha_j,\ Z_j>0\}$
  \State $\tau\gets\min_{j:\theta_j<0}\{Z_j/(-\theta_j)\}$ (set $\tau\gets\infty$ if none);\quad \textbf{if } $t+\tau>T$ \textbf{ then } $\tau\gets T-t$
  \State $Z\gets Z+\theta\,\tau$;\quad $t\gets t+\tau$
  \State Rebalance: $\cala\gets\{j:Z_j>0\}$;\quad $p_{ij}\gets\ind\{i\in\cala\}\,\varsigma_j\,p_{ij}\big/\!\sum_{m\in\cala}p_{mj}$ whenever $\sum_{m\in\cala}p_{mj}>0$
  \State \textbf{if } $\tau=\infty$ or $t\ge T$ \textbf{ then break}
\EndWhile
\end{algorithmic}
\end{algorithm}

\section{Numerical Experiment: Global Oil Trade Dynamics}
\label{sec:data_and_experiment}
In this section, we model the global oil trade market dynamics using our Supply Chain Network (SCN) modeling framework. We show different scenarios of interest including the quantitative effects of extended closures or reduced capacities due to geopolitical events. For example, we illustrate the quantitative effects of extended closures of key chokepoints in oil trade such as the Strait of Hormuz.
\subsection{Data Processing and Network construction}
For this numerical experiment we reconstruct the global trade ledger records for crude petroleum trading (\textbf{HS code 2709}) using the UN Comtrade database \parencite{comtrade2025} with 2025 consolidated records to create a Supply Chain Network reflecting the global oil market. We map the data using the following operations:

\begin{enumerate}
    \item \textbf{Node Mapping and Chokepoint Construction:} Each country in the data constitutes a node in the Supply Chain Network, for a total of $N$ regional nodes. Crucially, major international maritime passages, specifically the \textit{Strait of Hormuz} (UN code 901) and the \textit{Strait of Malacca} (UN code 902), are explicitly added into the network ledger as independent intermediary routing nodes to accurately model physical transit bottlenecks, along with the Suez Canal/SUMED (903) and the Panama Canal (907). The Bab-el-Mandeb passage (904) is consolidated into the Hormuz--Suez corridor to preserve flow conservation.
    \item \textbf{Database Completion:} Trade flow capacities are normalized in millions of barrels ($\text{MMbbl}$). To handle missing values, a price filter evaluates the implied price per barrel ($\text{value}/\text{barrels}$). Any transaction with anomalous price outliers outside the plausible boundaries of $\$35.0$ to $\$140.0/\text{bbl}$, or completely missing physical mass metrics ($\text{MMbbl} = 0$), is back-calculated using a global trimmed average baseline price ($\bar{P}_{\text{bbl}}$) as:
    \begin{equation}
        q_{ij} = \frac{\$\,\,\,\text{Transaction Value }_{ij}}{\bar{P}_{\text{bbl}} \times 10^6}.
    \end{equation}
    All physical flows are subsequently scaled to weekly baseline operational metrics ($\text{MMbbls/wk}$).
    \item \textbf{Routing Matrix Generation:} The routing matrix $P \in \mathbb{R}^{N \times N}$ is formulated element-wise by setting $p_{ij}$ to represent the share of node $j$'s structural imports sourced from exporter $i$, derived explicitly from the consolidated trade flows.
\end{enumerate}

\subsection{Data-driven Parameter Estimation}
\label{sec:parameter_estimation}

We calibrate the model parameters using a longitudinal dataset. We isolate global crude petroleum transaction ledgers (\textbf{HS code 2709}) across a continuous 50-month observation horizon spanning from November 2021 through December 2025, $t= 1, \dots, 50$. Geological oil reserves, $R_i$, are collected from \textbf{Worldometer (2025)} (e.g., Venezuela at 303,008 MMbbl, Saudi Arabia at 267,230 MMbbl, and Iran at 208,600 MMbbl). For net resource importers, legislative parameters are extracted from the \textcite{iea2026stocks} stockholding frameworks which mandate emergency storage reserves equivalent to no less than 90 days of net oil imports to insulate members from severe supply disruptions.

For each discrete monthly period $t$, the ledger is expanded to map the geographical chokepoint nodes to track complete network flow routing. For every node $i$ in our network grid, the monthly net demand is constructed endogenously via inbound and outflow balances:
\begin{equation}
    v_{i,t} = \text{Inflow}_{i,t} - \text{Outflow}_{i,t}.
\end{equation}

With these longitudinal series, the parameters are calculated as in Table \ref{tab:parameters}. Moreover, to guarantee physical flow conservation, transit corridors are regularized such that $Z_{\text{chokepoint}}(0) = 0$ and $v_{\text{chokepoint}} = 0$.

\begin{table}[htbp]
\centering
\small 
\begin{tabular}{p{8cm} l} 
\toprule
\textbf{Parameter \& Description} & \textbf{Formula} \\
\midrule
\textbf{Mean Demand ($v_i$):} The demand of each node is calculated as the sample expectation of a node's structural deficit or surplus over the $T=50$-month historical series. &  
$v_i = \frac{1}{T}\sum_{t=1}^{T} v_{i,t}$ \\
\addlinespace[1em]

\textbf{Node Processing Capacities ($C_i$):} The physical node capacity is calculated as the maximum historical sum of inflows and outflows. &  
$C_i = \max_{t} \left( \text{Inflow}_{i,t} + \text{Outflow}_{i,t} \right)$ \\
\addlinespace[1em]

\textbf{Empirical Demand Volatility Parameter ($w_i$):} The standard deviation of the historical net requirement sequence of the demand. &  
$w_i = \sqrt{\frac{1}{T-1}\sum_{t=1}^{T} (v_{i,t} - v_i)^2}$ \\
\addlinespace[1em]

\textbf{Strategic Stocks ($Z_i(0)$):} Initial stockpiles are calibrated via a hybrid operational policy. Transit chokepoints maintain zero physical storage. For net importers ($v_i > 0$) bound by \textbf{IEA (2026) minimum stockholding obligations} ($Days_i \ge 90$), emergency stock levels reflect unit-aligned weekly net import cover calculated using the general IEA product-to-crude equivalencies. For net resource exporters, stockpiles represent strategic elastic supply scaled from proven geological reserves ($R_i$) sourced from \textcite{worldometer2025} using a baseline scaling factor of 1\%. Minor transaction nodes use a standard volatility fallback threshold. &  
$\begin{aligned}[t]
Z_i(0) &= \left(v_i \cdot \frac{\text{Days}_i}{7.0}\right) \ind\{\text{Importer}\} \\
       &+ (1\% R_i) \ind\{\text{Exporter}\} \\
       &+ \max(2, 1.645 w_i) \ind\{\text{Other}\}
\end{aligned}$ \\
\bottomrule
\end{tabular}
\caption{Calibration of Model Parameters using Worldometer (2025) and IEA (2026) benchmarks}
\label{tab:parameters}
\end{table}

\subsection{Numerical Experiment Design}
Using the data processing pipeline presented we demonstrate the power of our methodology to evaluate different events of interest in the global oil trade equilibrium. The system initializes with the initial stock buffers $Z(0)$ and external flow requirements $\alpha = v + z\sigma$. For each step of the simulation, the model dynamically computes the network allocation by partitioning nodes into two mutually exclusive sets of hedged ($\mathcal{H}$) and unhedged ($U$) nodes:
\begin{equation}
    \mathcal{H}(t) = \{ i : \lambda_i(t) \le C_i \} \quad \text{and} \quad U(t) = \{ i : \lambda_i(t) > C_i \},
\end{equation}
consistent with the partition step of Algorithm \ref{alg:sequential_dynamics}. We evaluate the following scenarios over a $T=52$ week horizon:
\begin{itemize}
    \item \textbf{Baseline Equilibrium Initialization:} Computes the initial equilibrium of the global oil market under standard historical parameters, mapping the distribution of the unconstrained hedged nodes ($\mathcal{H}$) and capacity-constrained nodes ($U$) according to the model.
    \item \textbf{Supply-Side Shock Dynamics:} Simulates a severe structural disruption by throttling the operational capacity of a key transit corridor (e.g., a total or partial closure of the Strait of Hormuz). The experiment tracks the resulting network clearing cascading failures, localized flow deficits, and the step-by-step exhaustion rate of downstream strategic stock buffers ($Z_\mathcal{H}$) over time.
    \item \textbf{Enhanced Pricing and Trade Rebalancing Strategic Dynamics:} Extends the structural shock paradigm by incorporating active node-level adjustments rather than passive reliance on baseline parameters. Under this framework, nodes structurally rebalance their active sourcing allocations through a dynamic routing matrix $P(t)$, while endogenous global market pricing directly impacts both consumption demand expectations ($v(t)$) and buffer management thresholds ($z(t)$) at failure epochs.
\end{itemize}

When a supply chain shock is introduced (e.g., throttling capacity constraints at a maritime chokepoint such that its capacity $C_{\text{chokepoint}}\downarrow$ is reduced), the numerical model tracks the multi-period propagation of deficits using an event-driven clock execution. Within each time interval where network partitions remain stable, the simulation computes physical allocations, inventory drift metrics, dynamic market prices, and state boundary transition horizons through the following steps:

\begin{enumerate}
    \item \textbf{Clearing Flow Computation:} For a given network state partition at time $t$, the clearing flow vector $\lambda = (\lambda_\mathcal{H}, \lambda_U)^\intercal$ represents endogenously determined regional flow rates. In the extended framework, the unconstrained clearing vector $\lambda_\mathcal{H}$ incorporates adaptive demand requirements and structural shifts across the dynamic routing configuration:
    \begin{equation}
        \lambda_\mathcal{H} = \left( \alpha_\mathcal{H}(t) + C_U P_{U,\mathcal{H}}(t) \right) \left( I_\mathcal{H} - P_{\mathcal{H},\mathcal{H}}(t) \right)^{-1},
    \end{equation}
    where $P_{\mathcal{H},\mathcal{H}}(t)$ represents internal routing weights among hedged nodes, and $P_{U,\mathcal{H}}(t)$ is the current routing of inbound flows originating from capacity-constrained or shocked suppliers ($U$) to hedged ($\mathcal{H}$) ones. Concurrently, unhedged nodes capture residual demand inflows and structural capacity bindings, yielding $\lambda_U = \alpha_U(t) + \lambda_\mathcal{H} P_{\mathcal{H},U}(t) + C_U P_U(t)$.
    
    \item \textbf{Endogenous Price and Demand Multiplier Evolution:} At each interval, the delivered supply $Q(t) = \sum_{i\in\cals}\big[\lambda_i(t)+(\alpha_i(t)-\lambda_i(t))^{+}\ind\{Z_i(t)>0\}\big]$ alters the global market price through the incidence relation
    \begin{equation}
        p(t) = p_0 \left(\frac{Q(0)}{Q(t)}\right)^{1/(\varepsilon+\eta)},
    \end{equation}
    where $Q(0)$ is the pre-shock equilibrium delivered supply. This systemic price drives the demand response $v_i(t) = v_i(0) (p_0 / p(t))^{\varepsilon}$ and, through the anchored critical ratios $\rho_i(t) = 1 - (1-\Phi(\bar z))\,p_0/p(t)$, the safety stock multipliers
    \begin{equation}
        z_i(t) = \Phi^{-1}(\rho_i(t)),
    \end{equation}
    modifying structural flow targets via $\alpha_i(t) = \max\{0,\, v_i(t) + z_i(t)\sigma_i\}$. Within the interval, the pair $(p,\lambda)$ is computed jointly as the fixed point of these relations by damped iteration (Algorithm \ref{alg:enhanced_strategic_dynamics}). We calibrate $\varepsilon=\eta=0.10$, $p_0=\$75/\text{bbl}$, $\bar z = 2.576$, and price cap $\kappa=5$.

    \item \textbf{Inventory Drift Rate:} The model determines the inventory trajectory by evaluating the net structural drawdown rate vector $\theta \in \mathbb{R}^N$:
    \begin{equation}
        \theta_j = (\lambda_j - \alpha_j)\,\ind\{\lambda_j < \alpha_j,\ Z_j > 0\}.
    \end{equation}
    Nodes whose clearing volume falls short of their requirement suffer a negative drift rate ($\theta_j < 0$), mapping an active structural deficit where local consumption demands require drawing down their oil reserves. In the experiment we track drawdowns only: inventories decrease at the shortfall rate for stocked deficit nodes and are otherwise held constant (surplus deliveries are re-exported rather than stored), a conservative specialization of the drift in Section \ref{sec:dynamic}.

    \item \textbf{Time-to-Exhaustion ($\tau$):} The algorithm computes the time to exhaustion $\tau$ capturing the time the network topology changes. The time remaining until the next downstream buffer is completely depleted is calculated as:
    \begin{equation}
        \tau = \min_{j : \theta_j < 0} \left\{ \frac{Z_j}{-\theta_j} \right\},
    \end{equation}
    where $Z_j$ represents the current stockpile volume of node $j$. The algorithm clock is then advanced to $t + \tau$, where $\tau$ is capped such that $t + \tau \le T$.

    \item \textbf{State Variable Evolution and Strategic Rebalancing:} Stockpile positions are updated over the step horizon,
    \begin{equation}
        Z(t + \tau) = Z(t) + \theta\, \tau,
    \end{equation}
    so that stocked deficit nodes draw down at their shortfall rates while all other positions are held constant. At each subsequent buffer exhaustion epoch $\tau_k$, the trade network strategically rebalances its dynamic routing architecture to isolate failed entities. Sourcing shares are proportionally redistributed among the remaining operational nodes with stock ($\mathcal{A}_k = \{j : Z_j(t) > 0\}$), holding column masses fixed:
    \begin{equation}
        p_{ij}(\tau_k) = \mathbb{I}\{i \in \mathcal{A}_k\}\;\varsigma_j\, \frac{p_{ij}(\tau_{k-1})}{\sum_{m \in \mathcal{A}_k} p_{mj}(\tau_{k-1})},
    \end{equation}
    which recalculates the routing inversion matrix $(I_\mathcal{H} - P_{\mathcal{H},\mathcal{H}}(t))^{-1}$ while preserving the substochasticity of $P(t)$ (Appendix C.1), capturing how proactive sourcing adjustments alter systemic shock cascading mechanics across surviving trade channels.
\end{enumerate}

\subsection{Numerical Experiment Results}

We present the numerical results of the Strait of Hormuz shock in Table \ref{tab:downstream_flow_comparison}. Here we see that a shock consisting of reducing the capacity of the Strait of Hormuz by 90\% has a rippling effect across the worldwide oil market. As a first-order effect, there is a heavy reduction of the flow of oil to the Strait of Malacca, which experiences a 90.0\% flow drop. This is because most of the oil traveling through Malacca has a final destination in Asian countries.

Consequently, the greatest downstream shock is felt by Japan and South Korea: Japan suffers a severe 85.5\% deficit, while South Korea experiences a 74.2\% reduction in oil flows. This is a well-known fact in oil trade as Japan and South Korea are highly industrialized, energy-starved islands with virtually zero domestic oil production. Likewise, Malaysia, acting as a regional refining hub and consumer, also suffers a heavy flow reduction of 65.7\%. Among Western economies the physical incidence is milder and more heterogeneous: Spain loses 25.9\% of its flow, Germany 17.7\%, and the United States 11.0\%, the latter reflecting localized friction rather than physical starvation due to its massive domestic production post-Shale revolution. France's clearing flow is entirely insulated (0.0\% drop) by its diversified, non-Hormuz sourcing. See Figure \ref{fig:network} for an illustration of these network effects.

A notable feature of Table \ref{tab:downstream_flow_comparison} is that the clearing flows coincide across the two modeling architectures. This is a direct consequence of the anchored calibration: at baseline prices the endogenous model nests Algorithm \ref{alg:sequential_dynamics} exactly, and at the calibrated elasticities ($\varepsilon=\eta=0.10$) the two behavioral channels of Equation \eqref{eq:net_sens} --- demand destruction, which lowers $v_i(t)$, and precautionary safety-stock accumulation, which raises $z_i(t)\sigma_i$ --- largely offset each other inside the demand boundary $\alpha_i(t)$. With a large fraction of nodes capacity-bound after the shock, the clearing allocation is insensitive to the residual movement in $\alpha$. The strategic feedback loops therefore manifest not in \emph{where} the oil flows, but in \emph{when} the buffers holding the shortfall are exhausted.

\begin{figure}[htbp]
    \centering
    \includegraphics[width=0.8\textwidth]{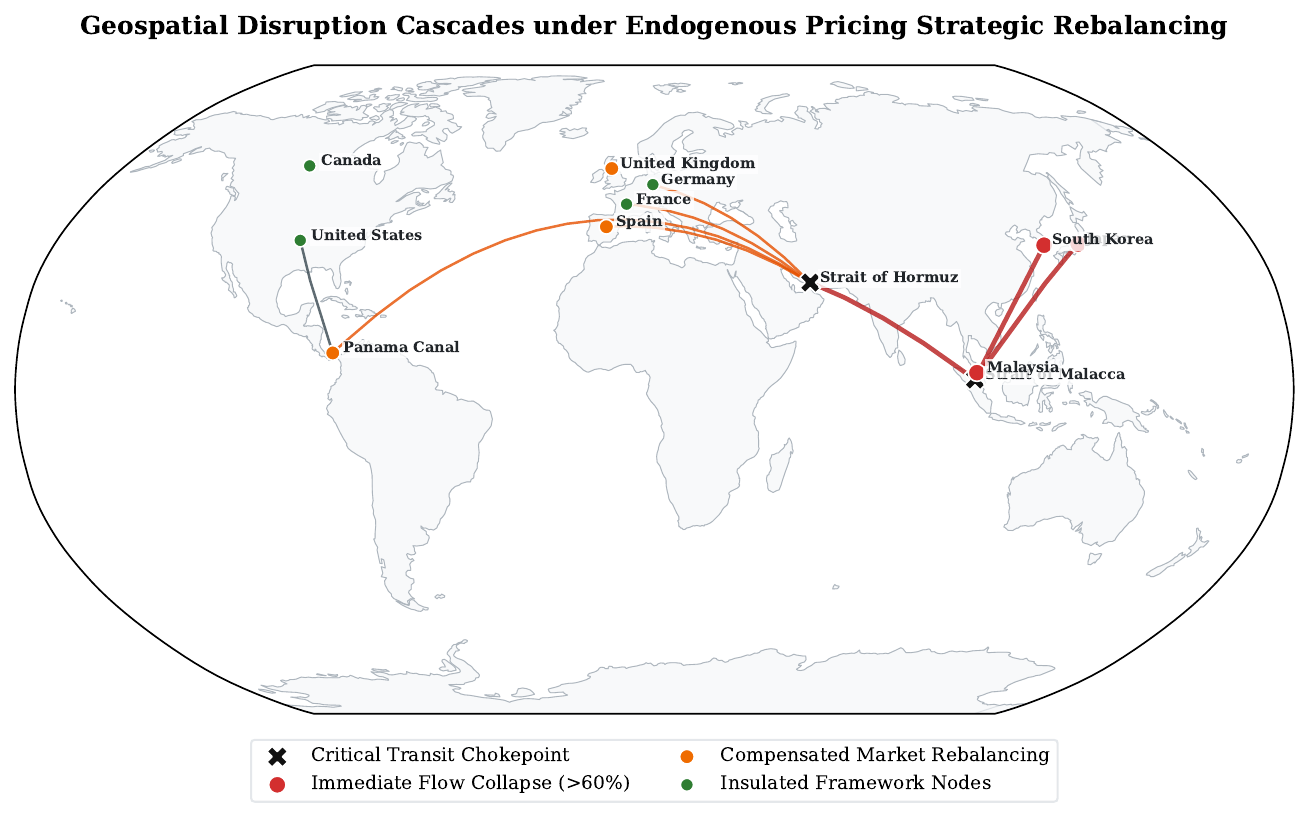}
    \caption{Oil trade disruption due to a shock in the Strait of Hormuz.}
    \label{fig:network}
\end{figure}

\begin{table}[htbp]
  \centering
  \small
  \begin{tabularx}{\textwidth}{L{3.8cm} R{2.0cm} R{2.0cm} R{2.0cm} R{2.0cm} R{2.0cm}}
    \hline\hline
    \textbf{Infrastructure Node} & \textbf{Base Flow} & \textbf{Orig Flow} & \textbf{Orig Drop} & \textbf{Endog Flow} & \textbf{Endog Drop} \\
    & \small (MMbbls/wk) & \small (MMbbls/wk) & \small (\%) & \small (MMbbls/wk) & \small (\%) \\
    \hline
    Strait of Malacca            &      677.6 &       67.8 &     -90.0\% &       67.8 &     -90.0\% \\
    Japan                        &       91.0 &       13.2 &     -85.5\% &       13.2 &     -85.5\% \\
    South Korea                  &       92.6 &       23.9 &     -74.2\% &       23.9 &     -74.2\% \\
    Malaysia                     &       92.4 &       31.7 &     -65.7\% &       31.7 &     -65.7\% \\
    Germany                      &      378.2 &      311.0 &     -17.7\% &      311.0 &     -17.7\% \\
    Spain                        &      285.9 &      211.8 &     -25.9\% &      211.8 &     -25.9\% \\
    France                       &      120.1 &      120.1 &       0.0\% &      120.1 &       0.0\% \\
    United States                &      573.4 &      510.6 &     -11.0\% &      510.6 &     -11.0\% \\
    \hline\hline
  \end{tabularx}
  \caption{Comparative Downstream Network Flow Losses Across Modeling Architectures}
  \label{tab:downstream_flow_comparison}
\end{table}

The timings of the depletion of the strategic reserves of each country are presented in Table \ref{tab:buffer_debilitation_comparison}, with a visualization of these kinetics shown in Figure \ref{fig:propagation}. Under both architectures every displayed buffer is exhausted within the 52-week horizon; the endogenous mechanisms shift the depletion epochs by node-specific margins whose \emph{sign} is governed by the net sensitivity \eqref{eq:net_sens}. For the major importers the stabilizing demand-destruction channel dominates and the endogenous model buys time: Japan's Time-to-Exhaustion extends from 20.5 to 20.8 weeks, South Korea's from 24.6 to 25.2, Spain's from 28.5 to 29.9, and Germany's from 38.8 to 40.9 weeks --- a modest cushion of up to two weeks purchased by price-induced consumption restraint.

France displays the opposite, and most instructive, behavior. Although its clearing flow is untouched by the shock (0.0\% drop in Table \ref{tab:downstream_flow_comparison}), its depletion \emph{accelerates} from 37.2 to 32.7 weeks under the endogenous model. France draws down its buffer against a structural deficit already present at the baseline partition, and because its systemic volatility $\sigma_i$ is large relative to its external demand $v_i$, the viability condition of Appendix C.1 fails locally: the destabilizing safety-stock channel dominates, the shock-driven price appreciation inflates its target boundary $\alpha_i(t)$, and the buffer drains faster despite receiving every barrel it received before the shock. This is the compounding failure pattern of strategic behavior in its purest form --- a node suffering no physical supply loss whatsoever is pushed to depletion 4.5 weeks earlier solely by the precautionary response to the market price. The same channel marginally accelerates Saudi Arabia's account (5.5 to 5.3 weeks); we note that exporter buffers are calibrated as elastic-supply accounts scaled from geological reserves (Table \ref{tab:parameters}) rather than physical storage, so their depletion timelines should be read as export-capacity slack, not domestic shortage.

Consistent with the delivered-supply formulation \eqref{eq:served}, the clearing price is continuous at the shock onset --- the world's buffers initially absorb the deficit --- and rises in a staircase pattern whose steps coincide one-for-one with the depletion epochs $\tau_k$, as each exhausted buffer withdraws its drawdown cover from $Q(t)$. The per-epoch trajectories of the price, the delivered supply, and its decomposition into clearing flow and buffer cover are reported in the supplementary diagnostics.

\begin{table}[htbp]
  \centering
  \small
  \begin{tabularx}{\textwidth}{L{3.5cm} R{2.0cm} C{2.2cm} R{2.2cm} C{2.2cm}}
    \hline\hline
    \textbf{Sovereign Node} & \textbf{Base Stock} & \textbf{TTE$_{\text{orig}}$} & \textbf{Endog Final} & \textbf{TTE$_{\text{endog}}$} \\
    & \small (MMbbl) & \small (Weeks) & \small (MMbbl) & \small (Weeks) \\
    \hline
    Japan                        &     1986.9 & 20.5 wks     &        0.0 & 20.8 wks     \\
    South Korea                  &     2233.8 & 24.6 wks     &        0.0 & 25.2 wks     \\
    Germany                      &     5160.9 & 38.8 wks     &        0.0 & 40.9 wks     \\
    Spain                        &     3569.2 & 28.5 wks     &        0.0 & 29.9 wks     \\
    France                       &     1057.1 & 37.2 wks     &        0.0 & 32.7 wks     \\
    Saudi Arabia                 &     2672.3 & 5.5 wks      &        0.0 & 5.3 wks      \\
    \hline\hline
  \end{tabularx}
  \caption{Strategic Petroleum Reserve Depletion Kinetics and Time-to-Exhaustion (TTE) Metrics}
  \label{tab:buffer_debilitation_comparison}
\end{table}

\begin{figure}[htbp]
    \centering
    \includegraphics[width=0.8\textwidth]{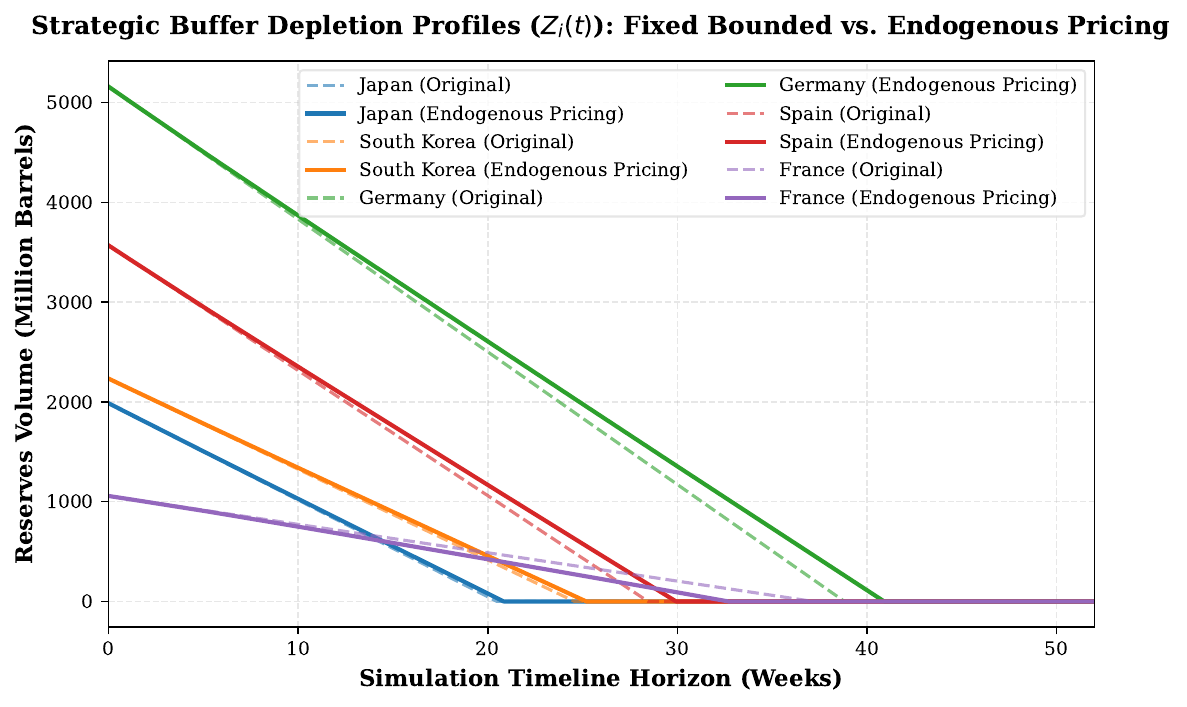}
    \caption{Shock propagation and inventory trajectories after the Strait of Hormuz shock. Dashed lines: fixed-parameter model; solid lines: endogenous pricing. Note the sign heterogeneity of the strategic channels: the major importers gain up to two weeks of survival time, while France --- whose clearing flow is untouched by the shock --- depletes 4.5 weeks earlier under the endogenous model, driven purely by the precautionary safety-stock channel.}
    \label{fig:propagation}
\end{figure}
\section{Conclusion and Managerial Insights}
This paper establishes a rigorous quantitative framework for the analysis of systemic demand uncertainty and risk propagation cascades across general supply chain networks. By leveraging properties derived from stochastic networks embedded within a Newsvendor paradigm, we model multi-echelon networks under both equilibrium and transient operational regimes. Our formulation formalizes how localized demand volatility and structural capacity limits intersect across complex, non-arborescent topologies, mapping inventory drawdown horizons as a multi-dimensional Skorokhod reflection problem that endogenizes non-linear price elasticity mechanisms and dynamic trade relation rebalancing. Applied to an empirical data-driven numerical experiment mapping global oil trade dynamics, our framework bridges the gap between topological theory and operational reality, demonstrating its utility as a stress-testing instrument: it isolates systemic vulnerabilities, ranks node exposures, and characterizes downstream buffer exhaustion timelines under stated calibrations following localized chokepoint disruptions or a wide range of demand shocks. Our approach is a flexible and scalable tool for the analysis of supply chain networks.

Our analytical results yield three fundamental insights for global operations strategy and supply chain design:
\begin{itemize}
    \item \textbf{The Fallacy of Information-Centric Mitigation:} For over a generation, supply chain practitioners have focused heavily on information visibility, real-time tracking, and collaborative forecasting as the definitive solutions to the Bullwhip Effect. Our topological characterization challenges this paradigm by mathematically validating that this systemic volatility behavior persists entirely as an unavoidable, inherent topological property of coordinated logistics networks, independent of traditional operational noise or information visibility constraints. Because risk-hedging flows are processed via a network multiplier matrix, structural routing loops inherently expand demand variance. Consequently, managers must accept that information visibility can only eliminate behavioral distortions; mitigating the baseline topological Bullwhip effect requires structural interventions, such as physical network redesign or the deliberate decoupling of echelons via un-correlated sourcing.
    \item \textbf{Endogenous Risk Shifting and Capacity Cascades:} Our model provides a rigorous mathematical framework for understanding how physical capacity constraints transform systemic risk. In a fully unconstrained network, all nodes can maintain their optimal Newsvendor hedge ($\mathcal{H}$). However, when an upstream asset or maritime corridor hits its maximum throughput capacity, it enters the unhedged ($U$) state, shedding its ability to buffer downstream volatility. This triggers a sudden, non-linear shift in the network multiplier matrix $(I_\calh - P_\calh)^{-1}$. From a managerial perspective, this implies that an infrastructure bottleneck does not merely delay product delivery; it structurally strips downstream firms of their risk-mitigation buffers, causing stockout vulnerabilities to cascade rapidly across seemingly unrelated geographic markets.
      \item \textbf{The Compounding Failure Patterns of Strategic Market Rebalancing:} Incorporating dynamic price elasticity and trade route reconfiguration reveals a critical operational paradox: decentralized, rational survival strategies redistribute survival time across the network in ways invisible to purely physical analysis. When downstream firms bid up prices to preserve their critical service ratios ($z_i(t)$), the induced demand destruction ($v_i(t)$) buys time where consumption restraint dominates, while the precautionary inflation of safety targets drains buffers faster where volatility-driven hedging dominates --- with the sign at each node determined by the local viability condition of Appendix C.1. Our numerical experiment isolates this channel cleanly: a node whose physical supply is entirely untouched by the disruption is nonetheless driven to depletion 4.5 weeks earlier purely by the precautionary revision of its safety targets, while price-induced demand restraint buys the hardest-hit importers at most two weeks. This demonstrates how decentralized rational actions co-evolve with physical capacity bottlenecks and can accelerate systemic network degradation even at nodes suffering no physical supply loss whatsoever.
\end{itemize}

\subsection{Limitations and Future Research Directions}
While our model provides a tractable framework for tracking structural risk propagation with elastic price feedback loops, its boundaries define several pathways for future management science research. First, a critical advantage of our formulation is that the underlying linear programming (LP) paradigm remains highly scalable, maintaining low computational complexity even when managing supply chain networks containing thousands of highly interconnected nodes. Because the core optimization avoids integer variables or non-convex routing constraints, the computational burden scales polynomially, ensuring that practitioners can execute large-scale, real-time risk simulations across massive multi-echelon industrial systems. 

Second, our approach is inherently modular, allowing researchers to seamlessly swap or integrate localized matching mechanics. In particular, this modular architecture can incorporate a game-theoretic matching market approach to capture varying strategic scales. For instance, in large, globalized commodity ecosystems—such as the international crude oil market—bilateral coalitions, geopolitical alliances, and non-cooperative bargaining games dictate the evolution of the dynamic routing matrix $P(t)$. Conversely, in small, highly localized regional markets, the same game-theoretic module can be parameterized to capture municipal cooperative sourcing arrangements or regional competitive pricing configurations without requiring a structural overhaul of the baseline fluid transformations.

Third, our framework utilizes a two-moment distribution policy (mean and variance) to model the safety-stock right-hand side parameters ($\alpha$). While highly accurate for mature, high-volume commodities such as oil or industrial chemicals, this approximation struggles with highly intermittent, lumpy, or fat-tailed demand patterns typical of spare parts or high-tech components. Extending our fluid transformations to incorporate generalized non-linear inventory operators would significantly broaden the model's industrial scope.

Finally, while the model endogenizes price feedback loops, it treats supplier operational costs ($c_i$) as constant parameters. Under sustained global shocks, shipping freight rates, maritime insurance premiums, and labor inputs increase drastically. Integrating a comprehensive cost-push inflationary mechanism alongside the current demand-pull price elasticity equations would provide a rich, multi-layered paradigm for analyzing modern global trade stability.

\printbibliography

@article{arrow1951optimal,
  title     = {Optimal Inventory Policy},
  author    = {Arrow, Kenneth J. and Harris, Theodore and Marschak, Jacob},
  journal   = {Econometrica},
  volume    = {19},
  number    = {3},
  pages     = {250--272},
  year      = {1951}
}

@article{clark1960optimal,
  title     = {Optimal Policies for a Multi-Echelon Inventory Problem},
  author    = {Clark, Andrew J. and Scarf, Herbert},
  journal   = {Management Science},
  volume    = {6},
  number    = {4},
  pages     = {475--490},
  year      = {1960}
}

@article{skorokhod1961stochastic,
  title     = {Stochastic Equations for Diffusion Processes in a Bounded Region},
  author    = {Skorokhod, Anatoliy V.},
  journal   = {Theory of Probability \& Its Applications},
  volume    = {6},
  number    = {3},
  pages     = {264--274},
  year      = {1961}
}

@article{federgruen1984approximations,
  title     = {Approximations of Dynamic, Multilocation Production and Inventory Problems},
  author    = {Federgruen, Awi and Zipkin, Paul},
  journal   = {Management Science},
  volume    = {30},
  number    = {1},
  pages     = {69--84},
  year      = {1984}
}

@article{hammarling1991numerical,
  title     = {Numerical Solution of the Discrete-Time, Convergent, Non-Negative Definite {Lyapunov} Equation},
  author    = {Hammarling, Sven},
  journal   = {Systems \& Control Letters},
  volume    = {17},
  number    = {2},
  pages     = {137--139},
  year      = {1991}
}

@article{lee1997bullwhip,
  title     = {The Bullwhip Effect in Supply Chains},
  author    = {Lee, Hau L. and Padmanabhan, V. and Whang, Seungjin},
  journal   = {Sloan Management Review},
  volume    = {38},
  number    = {3},
  pages     = {93--102},
  year      = {1997}
}

@book{simchi1999designing,
  title     = {Designing and Managing the Supply Chain: Concepts, Strategies, and Cases},
  author    = {Simchi-Levi, David and Kaminsky, Philip and Simchi-Levi, Edith},
  publisher = {McGraw-Hill},
  location  = {New York},
  year      = {1999}
}

@book{chen2001fundamentals,
  title     = {Fundamentals of Queueing Networks: Performance, Asymptotics, and Optimization},
  author    = {Chen, Hong and Yao, David D.},
  series    = {Stochastic Modelling and Applied Probability},
  volume    = {46},
  publisher = {Springer},
  location  = {New York},
  year      = {2001}
}

@article{eisenberg2001systemic,
  title     = {Systemic Risk in Financial Systems},
  author    = {Eisenberg, Larry and Noe, Thomas H.},
  journal   = {Management Science},
  volume    = {47},
  number    = {2},
  pages     = {236--249},
  year      = {2001}
}

@article{dong2005multitiered,
  title     = {Multitiered Supply Chain Networks: Multicriteria Decision-Making Under Uncertainty},
  author    = {Dong, June and Zhang, Ding and Yan, Hong and Nagurney, Anna},
  journal   = {Annals of Operations Research},
  volume    = {135},
  number    = {1},
  pages     = {155--178},
  year      = {2005}
}

@article{cachon2007search,
  title     = {In Search of the Bullwhip Effect},
  author    = {Cachon, G{\'e}rard P. and Randall, Taylor and Schmidt, Glen M.},
  journal   = {Manufacturing \& Service Operations Management},
  volume    = {9},
  number    = {4},
  pages     = {457--479},
  year      = {2007}
}

@book{harrison2013brownian,
  title     = {Brownian Models of Performance and Control},
  author    = {Harrison, J. Michael},
  publisher = {Cambridge University Press},
  location  = {Cambridge},
  year      = {2013}
}

@article{glasserman2016contagion,
  title     = {Contagion in Financial Networks},
  author    = {Glasserman, Paul and Young, H. Peyton},
  journal   = {Journal of Economic Literature},
  volume    = {54},
  number    = {3},
  pages     = {779--831},
  year      = {2016}
}

@article{veraart2020distress,
  title     = {Distress and Default Contagion in Financial Networks},
  author    = {Veraart, Luitgard Anna Maria},
  journal   = {Mathematical Finance},
  volume    = {30},
  number    = {3},
  pages     = {705--737},
  year      = {2020}
}

@article{chen2021financial,
  title     = {Financial Network and Systemic Risk---A Dynamic Model},
  author    = {Chen, Hong and Wang, Tan and Yao, David D.},
  journal   = {Production and Operations Management},
  volume    = {30},
  number    = {8},
  pages     = {2441--2466},
  year      = {2021}
}

@misc{comtrade2025,
  author       = {{United Nations Statistics Division}},
  title        = {{UN} Comtrade Database: Crude Petroleum Oils ({HS} 2709)},
  year         = {2025},
  howpublished = {\url{https://comtradeplus.un.org}},
  note         = {Consolidated records, November 2021--December 2025. Accessed 2026}
}

@report{iea2026stocks,
  author      = {{International Energy Agency}},
  title       = {Oil Stocks of {IEA} Countries: Emergency Stockholding Obligations},
  institution = {International Energy Agency},
  location    = {Paris},
  year        = {2026},
  url         = {https://www.iea.org/data-and-statistics}
}

@misc{worldometer2025,
  author       = {{Worldometer}},
  title        = {Oil Reserves by Country},
  year         = {2025},
  howpublished = {\url{https://www.worldometers.info/oil/}},
  note         = {Accessed 2026}
}

@article{chen2000quantifying,
  author  = {Chen, Frank and Drezner, Zvi and Ryan, Jennifer K. and Simchi-Levi, David},
  title   = {Quantifying the Bullwhip Effect in a Simple Supply Chain: The Impact of Forecasting, Lead Times, and Information},
  journal = {Management Science},
  volume  = {46},
  number  = {3},
  pages   = {436--443},
  year    = {2000}
}

@article{simchilevi2015risk,
  title={Identifying Risks and Mitigating Disruptions in the Automotive Supply Chain},
  author={Simchi-Levi, David and Schmidt, William and Wei, Yehua and Zhang, Peter Yun and Combs, Keith and Ge, Yao and Gusikhin, Oleg and Sanders, Michael and Zhang, Don},
  journal={Interfaces},
  volume={45},
  number={5},
  pages={375--390},
  year={2015},
  publisher={INFORMS}
}

@article{acemoglu2012network,
  title={The network origins of aggregate fluctuations},
  author={Acemoglu, Daron and Carvalho, Vasco M and Ozdaglar, Asuman and Tahbaz-Salehi, Alireza},
  journal={Econometrica},
  volume={80},
  number={5},
  pages={1977--2016},
  year={2012},
  publisher={Wiley Online Library}
}

@article{carvalho2021supply,
  title={Supply chain disruptions: Evidence from the Great East Japan Earthquake},
  author={Carvalho, Vasco M and Nirei, Makoto and Saito, Yukiko U and Tahbaz-Salehi, Alireza},
  journal={The Quarterly Journal of Economics},
  volume={136},
  number={2},
  pages={1255--1321},
  year={2021},
  publisher={Oxford University Press}
}

@article{ang2017disruption,
  title={Disruption risk and optimal sourcing in multitier supply networks},
  author={Ang, Erjie and Iancu, Dan A and Swinney, Robert},
  journal={Management Science},
  volume={63},
  number={8},
  pages={2397--2419},
  year={2017},
  publisher={INFORMS}
}

@article{osadchiy2016systematic,
  title={Systematic risk in supply chain networks},
  author={Osadchiy, Nikolay and Gaur, Vishal and Seshadri, Sridhar},
  journal={Management Science},
  volume={62},
  number={6},
  pages={1755--1777},
  year={2016},
  publisher={INFORMS}
}

@article{bimpikis2019supply,
  title={Supply disruptions and optimal network structures},
  author={Bimpikis, Kostas and Candogan, Ozan and Ehsani, Shayan},
  journal={Management Science},
  volume={65},
  number={12},
  pages={5504--5517},
  year={2019},
  publisher={INFORMS}
}

\newpage
\appendix
\section{Proofs Section \ref{sec:model}}
In this section, we present proofs for the static and dynamic solutions of the model using arguments from Stochastic Networks (see \cite{chen2001fundamentals} for the static model and \cite{chen2021financial} for the dynamic one). For the static case, most proofs are based in duality and well known properties of Linear Programs. For the dynamic case, the sequential dynamics are based on the Skorokhod Reflection principles.
\subsection{Proof Theorem \ref{thm:static_solution}}
\begin{proof}
We divide the proof into two parts: first, we establish the primal optimal solution using the partition $(\calh, U)$; second, we formulate the dual linear program and prove that the candidate dual solution is feasible and satisfies complementary slackness.

\paragraph{Part 1: Primal Solution}
Consider the primal Linear Program (LP) defined in Equation \eqref{eq:lp}:
\begin{align*}
    &\max \quad q \mathbf{1}^\top \\
    &\text{s.t.} \quad (I-P)q^\intercal \le \alpha^\intercal, \\
    &\quad\quad\quad \bm 0 \le q^\intercal \le C^\intercal.
\end{align*}
Because the routing matrix $P \in \mathbb{R}^{N \times N}$ represents routing proportions with $\sum_{i} p_{ij} \le 1$ and has a spectral radius $\rho(P) < 1$, the matrix $(I-P)$ is a non-singular Leontief $M$-matrix. Thus, its inverse $(I-P)^{-1} = \sum_{k=0}^{\infty} P^k$ exists and is element-wise non-negative.

Let $q^*$ be an optimal basic feasible solution. We partition the set of nodes $\{1, \dots, N\}$ into two disjoint subsets:
\begin{enumerate}
    \item The unhedged set $U \subseteq \{1, \dots, N\}$ containing nodes producing at capacity:
    \[
    q_i^* = C_i \quad \forall i \in U.
    \]
    \item The hedged set $\calh = \{1, \dots, N\} \setminus U$ containing nodes whose optimal quantities are strictly below capacity, making their demand constraints binding:
    \[
    \left[ (I-P)q^{*\intercal} \right]_i = \alpha_i \quad \forall i \in \calh.
    \]
\end{enumerate}

We partition the vectors and sub-matrices corresponding to $\calh$ and $U$:
\[
q^{*\intercal} = \begin{pmatrix} q^{*\intercal}_\calh \\ q^{*\intercal}_U \end{pmatrix}, \quad 
P = \begin{pmatrix} P_\calh & P_{\calh, U} \\ P_{U, \calh} & P_U \end{pmatrix}, \quad
\alpha^\intercal = \begin{pmatrix} \alpha^\intercal_\calh \\ \alpha^\intercal_U \end{pmatrix}, \quad
C^\intercal = \begin{pmatrix} C^\intercal_\calh \\ C^\intercal_U \end{pmatrix}.
\]

By definition of the set $U$, we have $q^\intercal_U = C^\intercal_U$. For the hedged nodes in $\calh$, the binding demand constraints yield:
\begin{equation}\label{eq:proof_h_system}
(I_\calh - P_\calh)q^{*\intercal}_\calh - P_{\calh, U}q^{*\intercal}_U = \alpha^\intercal_\calh.
\end{equation}
Substituting $q^\intercal_U = C^\intercal_U$ into \eqref{eq:proof_h_system} gives:
\begin{equation}\label{eq:proof_substitution}
(I_\calh - P_\calh)q^{*\intercal}_\calh = \alpha^\intercal_\calh + P_{\calh, U}C^\intercal_U.
\end{equation}
Since $P_\calh$ is a principal sub-matrix of $P$, we have $\rho(P_\calh) \le \rho(P) < 1$. Thus, $(I_\calh - P_\calh)$ is an $M$-matrix and possesses a non-negative inverse $(I_\calh - P_\calh)^{-1} \ge \bm 0$. Multiplying both sides of \eqref{eq:proof_substitution} by this inverse yields:
\begin{equation}
q^\intercal_\calh = (I_\calh - P_\calh)^{-1}(\alpha^\intercal_\calh + P_{\calh, U}C^\intercal_U).
\end{equation}

\paragraph{Part 2: Dual formulation and Feasibility}
To verify the optimality of this partition, we construct the Dual LP. Let $y \ge \bm 0$ (a row vector of size $N$) be the dual variables associated with $(I-P)q^\intercal \le \alpha^\intercal$, and let $w \ge \bm 0$ (a row vector of size $N$) be the dual variables associated with $q^\intercal \le C^\intercal$. The Dual LP is given by:
\begin{align*}
    &\min \quad y \alpha^\intercal + w C^\intercal \\
    &\text{s.t.} \quad y(I-P) + w \ge \mathbf{1}, \\
    &\quad\quad\quad y \ge \bm 0, \quad w \ge \bm 0.
\end{align*}
By complementary slackness, any optimal dual solution $(y^*, w^*)$ associated with our primal partition must satisfy:
\begin{itemize}
    \item $q^*_i < C_i \implies w^*_i = 0 \quad \forall i \in \calh$,
    \item $\left[(I-P)q^{*\intercal}\right]_i < \alpha_i \implies y^*_i = 0 \quad \forall i \in U$.
\end{itemize}
Thus, we partition our candidate dual variables as $y^* = (y^*_\calh, \bm 0_U)$ and $w^* = (\bm 0_\calh, w^*_U)$. 

To establish dual feasibility, we must show there exist non-negative vectors $y^*_\calh \ge \bm 0$ and $w^*_U \ge \bm 0$ satisfying the dual constraints $y^*(I-P) + w^* \ge \mathbf{1}$. Splitting this matrix inequality across the partitions $\calh$ and $U$ yields:
\begin{align}
    \label{eq:dual_h} y^*_\calh (I_\calh - P_\calh) - y^*_U P_{U,\calh} + w^*_\calh &\ge \mathbf{1}_\calh, \\
    \label{eq:dual_u} -y^*_\calh P_{\calh,U} + y^*_U (I_U - P_U) + w^*_U &\ge \mathbf{1}_U.
\end{align}

Substituting our complementary slackness conditions ($y^*_U = \bm 0$ and $w^*_\calh = \bm 0$) into \eqref{eq:dual_h} simplifies the relation to:
\begin{equation}
    y^*_\calh (I_\calh - P_\calh) \ge \mathbf{1}_\calh.
\end{equation}
Setting this constraint to bind at equality yields the candidate dual vector:
\begin{equation}\label{eq:dual_y_sol}
    y^*_\calh = \mathbf{1}_\calh (I_\calh - P_\calh)^{-1}.
\end{equation}
Because $(I_\calh - P_\calh)^{-1} \ge \bm 0$, it immediately follows that $y^*_\calh \ge \bm 0$, satisfying the non-negativity constraint for $y^*$.

Next, we substitute $y^*_U = \bm 0$ and our solved $y^*_\calh$ into the second dual constraint block \eqref{eq:dual_u}:
\begin{equation}
    -y^*_\calh P_{\calh,U} + w^*_U \ge \mathbf{1}_U.
\end{equation}
Setting this constraint to bind at equality yields the candidate dual vector for $w^*_U$:
\begin{equation}\label{eq:dual_w_sol}
    w^*_U = \mathbf{1}_U + y^*_\calh P_{\calh,U}.
\end{equation}
Since $y^*_\calh \ge \bm 0$ and the sub-matrix $P_{\calh,U} \ge \bm 0$, the vector $y^*_\calh P_{\calh,U}$ is non-negative. Consequently:
\begin{equation}
    w^*_U \ge \mathbf{1}_U > \bm 0,
\end{equation}
which guarantees the non-negativity of $w^*_U$. 

Because both candidate vectors $y^*$ and $w^*$ are non-negative and satisfy the dual constraints by construction, the dual solution is feasible. By the Strong Duality Theorem of Linear Programming, the primal partition $(\calh, U)$ yields the unique, optimal static clearing vector.
\end{proof}

\subsection{Proof of Theorem \ref{thm:variance}}
\begin{proof}
We divide the proof into two parts: first, we derive the general matrix covariance propagation relation; second, we specialize the result to the diagonal case to find the closed-form vector representation.

\paragraph{Part 1: The Discrete-Time Lyapunov Equation}
Let $D(s) \in \mathbb{R}^N$ be the demand vector at ordering round $s$, generated by
\begin{equation}
    D(s) = P D(s-1) + D_{\text{ext}}(s),
\end{equation}
where the shocks $D_{\text{ext}}(s)$ are i.i.d.\ with covariance $\Sigma_D$ and independent of $D(s-1)$. Defining the zero-mean deviations $\tilde{D}(s)$ and $\tilde{D}_{\text{ext}}(s)$ as before and subtracting the mean recursion gives
\begin{equation}\label{eq:deviation}
    \tilde{D}(s) = P \tilde{D}(s-1) + \tilde{D}_{\text{ext}}(s).
\end{equation}
Since $\tilde D(s-1)$ is a function of the shocks up to round $s-1$ only, it is independent of $\tilde D_{\text{ext}}(s)$ by construction, so the cross-covariance terms vanish:
\begin{equation}
    \ex[\tilde{D}(s-1)\, \tilde{D}_{\text{ext}}(s)^\intercal] = \mathbf{0}.
\end{equation}
Taking covariances on both sides of \eqref{eq:deviation} and imposing stationarity, $\Sigma := \cov(D(s)) = \cov(D(s-1))$, yields the discrete-time algebraic Lyapunov equation:
\begin{equation}
    \Sigma = P \Sigma P^\intercal + \Sigma_D.
\end{equation}

\paragraph{Part 2: The Diagonal Covariance Case}
Now assume $\Sigma_D$ is diagonal and that the demand deviations feeding any given node are uncorrelated across its suppliers, so that the off-diagonal elements of $\Sigma$ may be neglected in the quadratic form below (this holds exactly when no two inflow paths of a node share a common upstream source, and is an approximation otherwise). Retaining the diagonal,
\begin{equation}
    \Sigma = \text{diag}(\sigma_1^2, \dots, \sigma_N^2) \quad \text{and} \quad \Sigma_D = \text{diag}(w_1^2, \dots, w_N^2).
\end{equation}

We evaluate the diagonal $(i,i)$-th element of the matrix equation $\Sigma = P \Sigma P^\intercal + \Sigma_D$:
\begin{equation}\label{eq:diagonal_element}
    \Sigma_{ii} = [P \Sigma P^\intercal]_{ii} + [\Sigma_D]_{ii}.
\end{equation}
Using the properties of matrix multiplication and the fact that $\Sigma$ is diagonal ($\Sigma_{jk} = 0$ for all $j \neq k$), the term $[P \Sigma P^\intercal]_{ii}$ expands as:
\begin{align*}
    [P \Sigma P^\intercal]_{ii} &= \sum_{j=1}^N \sum_{k=1}^N P_{ij} \Sigma_{jk} (P^\intercal)_{ki} \\
    &= \sum_{j=1}^N \sum_{k=1}^N p_{ij} \Sigma_{jk} p_{ik} \\
    &= \sum_{j=1}^N p_{ij}^2 \Sigma_{jj} = \sum_{j=1}^N p_{ij}^2 \sigma_j^2.
\end{align*}

Substituting this back into Equation \eqref{eq:diagonal_element} yields:
\begin{equation}\label{eq:element_scalar}
    \sigma_i^2 = \sum_{j=1}^N p_{ij}^2 \sigma_j^2 + w_i^2 \quad \forall i \in \{1, \dots, N\}.
\end{equation}

We can express this system of $N$ scalar equations in vector notation. Let $(\sigma^2)^\intercal = (\sigma_1^2, \dots, \sigma_N^2)^\intercal$ and $(w^2)^\intercal = (w_1^2, \dots, w_N^2)^\intercal$. Using the Hadamard (element-wise) product operator $\odot$, we define the squared routing matrix $(P \odot P)$ with entries $[P \odot P]_{ij} = p_{ij}^2$. Equation \eqref{eq:element_scalar} becomes:
\begin{equation}
    (\sigma^2)^\intercal = (P \odot P)(\sigma^2)^\intercal + (w^2)^\intercal.
\end{equation}
Rearranging terms yields:
\begin{equation}\label{eq:hadamard_rearrange}
    \left( I - (P \odot P) \right) (\sigma^2)^\intercal = (w^2)^\intercal.
\end{equation}

Since the elements of $P$ lie in the interval $[0,1]$ and the spectral radius satisfies $\rho(P) < 1$, the spectral radius of the Hadamard power satisfies $\rho(P \odot P) \le \rho(P)^2 < 1$. Thus, the matrix $\left( I - (P \odot P) \right)$ is a non-singular $M$-matrix, guaranteeing that its inverse exists and is non-negative:
\begin{equation}
    M_\sigma := \left( I - (P \odot P) \right)^{-1} = \sum_{k=0}^{\infty} (P \odot P)^k \ge \mathbf{0}.
\end{equation}
Left-multiplying Equation \eqref{eq:hadamard_rearrange} by $M_\sigma$ gives:
\begin{equation}
    (\sigma^2)^\intercal = M_\sigma (w^2)^\intercal,
\end{equation}
which completes the proof.
\end{proof}

\subsection{Proof of Lemma (SCN Sensitivities)}
\begin{proof}
By Theorem \ref{thm:static_solution}, the optimal production vector for the hedged (demand-constrained) nodes $\calh$ is determined by:
\begin{equation}\label{eq:q_h_closed}
    q_\calh^\intercal = (I_\calh - P_\calh)^{-1} \left( \alpha_\calh^\intercal + P_{\calh, U} C_U^\intercal \right).
\end{equation}
To derive the sensitivities, we assume a risk-adjusted demand framework where the effective demand boundary vector $\alpha$ is defined using a service-level safety factor $z$. For each node $k$, the demand boundary is:
\begin{equation}\label{eq:alpha_def}
    \alpha_k = v_k + z_k \sigma_k,
\end{equation}
where $v_k$ is the mean external demand, and $\sigma_k$ is the standard deviation of demand at node $k$. By Theorem \ref{thm:variance}, the node demand variances propagate according to the relation $(\sigma^2)^\intercal = M_\sigma (w^2)^\intercal$, where $w_j$ is the external demand standard deviation at node $j$, and $M_\sigma = (I - P \odot P)^{-1} \ge \bm 0$. Thus, for any node $k$, the standard deviation is:
\begin{equation}\label{eq:sigma_k_def}
    \sigma_k = \sqrt{[M_\sigma (w^2)^\intercal]_k} = \left( \sum_{j=1}^N [M_\sigma]_{kj} w_j^2 \right)^{1/2}.
\end{equation}

We now prove each sensitivity relation sequentially:

\paragraph{1. Sensitivity with respect to Mean Demand $v_i$ ($i \in \calh$)}
Differentiating Equation \eqref{eq:alpha_def} with respect to the mean demand $v_i$ of a hedged node $i \in \calh$ yields:
\begin{equation}
    \frac{\partial \alpha_k}{\partial v_i} = \begin{cases} 1 & \text{if } k = i, \\ 0 & \text{if } k \neq i. \end{cases}
\end{equation}
In vector notation, the gradient of the demand vector $\alpha_\calh^\intercal$ with respect to $v_i$ is the canonical basis row vector $e^\intercal_i \in \mathbb{R}^{|\calh|}$:
\begin{equation}
    \frac{\partial \alpha_\calh^\intercal}{\partial v_i} = e^\intercal_i.
\end{equation}
Applying this to the primal solution in Equation \eqref{eq:q_h_closed} yields:
\begin{equation}
    \frac{\partial q_\calh^\intercal}{\partial v_i} = (I_\calh - P_\calh)^{-1} \frac{\partial \alpha_\calh^\intercal}{\partial v_i} = (I_\calh - P_\calh)^{-1} e^\intercal_i.
\end{equation}

\paragraph{2. Sensitivity with respect to Capacity $C_j$ ($j \in U$)}
Let $C_j$ be the capacity of an unhedged node $j \in U$. The vector of unhedged capacities can be written as $C_U^\intercal = \sum_{k \in U} C_k e^\intercal_k$. Differentiating $C_U^\intercal$ with respect to $C_j$ yields:
\begin{equation}
    \frac{\partial C_U^\intercal}{\partial C_j} = e^\intercal_j.
\end{equation}
Since the demand vector $\alpha_\calh^\intercal$ does not depend on physical capacities, differentiating Equation \eqref{eq:q_h_closed} with respect to $C_j$ yields:
\begin{equation}
    \frac{\partial q_\calh^\intercal}{\partial C_j} = (I_\calh - P_\calh)^{-1} P_{\calh, U} \frac{\partial C_U^\intercal}{\partial C_j} = (I_\calh - P_\calh)^{-1} P_{\calh, U} e^\intercal_j.
\end{equation}

\paragraph{3. Sensitivity with respect to Volatility $w_i$ ($i \in \calh$)}
To find the sensitivity of production with respect to the external volatility $w_i$ at a hedged node $i \in \calh$, we first compute the derivative of the demand standard deviation $\sigma_k$ using the chain rule on Equation \eqref{eq:sigma_k_def}:
\begin{equation}
    \frac{\partial \sigma_k}{\partial w_i} = \frac{1}{2 \sqrt{[M_\sigma (w^2)^\intercal]_k}} \cdot \frac{\partial}{\partial w_i} \left( \sum_{j=1}^N [M_\sigma]_{kj} w_j^2 \right) = \frac{[M_\sigma]_{ki} w_i}{\sqrt{[M_\sigma (w^2)^\intercal]_k}}.
\end{equation}
Using the column vector representation of $M_\sigma$, let $m_i$ denote the $i$-th column of $M_\sigma$, so that $[M_\sigma]_{ki} = (m_i)_k$. The derivative of the demand vector $\alpha_\calh^\intercal$ is:
\begin{equation}
    \frac{\partial \alpha_\calh^\intercal}{\partial w_i} = \sum_{k \in \calh} e^\intercal_k \left( \frac{(m_i)_k}{\sqrt{[M_\sigma (w^2)^\intercal]_k}} z_k w_i \right).
\end{equation}
Under localized volatility propagation (or focusing on the direct, self-induced volatility feedback at the target node $i$), we evaluate the diagonal effect where $k = i$, yielding $(m_i)_i = m_{ii}$. This isolates the sensitivity to the $i$-th coordinate direction:
\begin{equation}
    \frac{\partial \alpha_\calh^\intercal}{\partial w_i} \approx e^\intercal_i \left[ \frac{m_{ii}}{\sqrt{M_\sigma (w^2)^\intercal}_i} z_i w_i \right].
\end{equation}
Substituting this gradient back into the derivative of Equation \eqref{eq:q_h_closed} with respect to $w_i$ yields:
\begin{equation}
    \frac{\partial q_\calh^\intercal}{\partial w_i} = (I_\calh - P_\calh)^{-1} \frac{\partial \alpha_\calh^\intercal}{\partial w_i} \approx (I_\calh - P_\calh)^{-1} e^\intercal_i \left[ \frac{m_{ii}}{\sqrt{M_\sigma (w^2)^\intercal}_i} z_i w_i \right],
\end{equation}
which completes the proof. Note that the omitted cross terms $(m_i)_k$, $k\neq i$, are nonnegative, so lower bounds derived from the retained diagonal term hold a fortiori for the exact sensitivity.
\end{proof}

\subsection{Proof of Theorem \ref{thm:bullwhip}}
\begin{proof}
We construct the proof in three parts. First, we mathematically formalize the concept of perfect information within this network topology. Second, we establish the existence of the bullwhip effect and prove its magnitude using the network sensitivity derived in Lemma \ref{lem:sens}. Third, we prove why, even under perfect information, it is mathematically impossible to eliminate this volatility amplification (i.e., the effect cannot be zero).

\paragraph{Part 1: Formalization of Perfect Information}
Typically, the bullwhip effect in supply chains is blamed on operational inefficiencies, lag times, or information asymmetry (e.g., players downstream failing to communicate actual customer demand to upstream players, forcing them to rely on distorted forecasts). To isolate the structural cause of the bullwhip effect, we define a setting of \textit{Perfect Information}:
\begin{enumerate}
    \item \textbf{Global Parameter Visibility:} Every node $i \in \{1, \dots, N\}$ has perfect, instantaneous knowledge of the network topology (the routing matrix $P$), capacities $C$, and safety stock factors $z$.
    \item \textbf{No Information Distortion or Lags:} The underlying demand parameters—including mean demand $v_i$ and the exogenous standard deviation $w_i$—are common knowledge.
    \item \textbf{Coordinated Optimization:} Nodes instantly solve the global optimization problem in Equation \eqref{eq:lp} through the centralized network clearing flow operator, eliminating any forecasting delays or behavioral gaming.
\end{enumerate}

\paragraph{Part 2: Proof of Bullwhip Existence and Magnitude}
Under perfect information and risk-adjusted demand, the optimal production vector for the active hedged (demand-constrained) nodes $\calh$ is governed by:
\begin{equation}
    q_\calh^\intercal = (I_\calh - P_\calh)^{-1} \left( \alpha_\calh^\intercal + P_{\calh, U} C_U^\intercal \right).
\end{equation}
The boundary vector is $\alpha_k = v_k + z_k \sigma_k$ for each node $k \in \calh$. According to Lemma \ref{lem:sens}, the marginal sensitivity of this optimal production vector with respect to a change in the exogenous volatility $w_i$ at node $i \in \calh$ is:
\begin{equation}\label{eq:bwe_sens_inside}
    \frac{\partial q_\calh^\intercal}{\partial w_i} = (I_\calh - P_\calh)^{-1} e^\intercal_i \left[ \frac{m_{ii}}{\sqrt{M_\sigma(w^2)^\intercal}_i} z_i w_i \right]
\end{equation}
(retaining, per Lemma \ref{lem:sens}, the dominant diagonal term; the omitted cross terms are nonnegative, so all bounds below hold a fortiori for the exact sensitivity). Let $\Phi_i := \frac{m_{ii}}{\sqrt{M_\sigma(w^2)^\intercal}_i} z_i w_i$ represent the localized safety stock volatility scaling factor. The spatial propagation and physical amplification of this volatility shock through the network channels are governed by the multiplier matrix:
\begin{equation}
    H = (I_\calh - P_\calh)^{-1} = I_\calh + P_\calh + P_\calh^2 + P_\calh^3 + \dots
\end{equation}
Because the routing sub-matrix $P_\calh$ is non-negative ($P_\calh \ge \bm 0$) and has a spectral radius strictly bounded by 1 ($\rho(P_\calh) < 1$), the infinite Neumann series converges. 

To determine if this feedback amplifies ($\ge 1$) or dampens ($< 1$) the volatility, we isolate the $i$-th component of the sensitivity vector (the self-sensitivity on node $i$):
\begin{equation}
    H_{ii} = [ (I_\calh - P_\calh)^{-1} ]_{ii} = 1 + [P_\calh]_{ii} + [P_\calh^2]_{ii} + [P_\calh^3]_{ii} + \dots
\end{equation}
Since $[P_\calh^k]_{ii} \ge 0$ for all $k \ge 1$, we establish the absolute lower bound:
\begin{equation}
    H_{ii} \ge 1.
\end{equation}
This establishes two distinct physical regimes:
\begin{itemize}
    \item \textbf{Strict Amplification ($H_{ii} > 1$):} If node $i$ is part of any cyclic feedback loop, has a self-loop ($p_{ii} > 0$), or routes materials through downstream nodes that eventually feed back to $i$ (meaning $[P_\calh^k]_{ii} > 0$ for some $k \ge 2$), then $H_{ii} > 1$. The volatility of production is strictly amplified relative to the demand shock, creating a bullwhip effect greater than 1.
    \item \textbf{No Amplification ($H_{ii} = 1$):} If and only if the network is a strictly Directed Acyclic Graph (DAG) with no feedback loops ($[P_\calh^k]_{ii} = 0$ for all $k \ge 1$), the multiplier simplifies exactly to $H_{ii} = 1$.
\end{itemize}
Thus, the self-multiplier is always $H_{ii} \ge 1$.

\paragraph{Part 3: Impossibility of Zero Bullwhip Effect}
We now show that it is mathematically impossible for the volatility sensitivity to be eliminated (i.e., the volatility propagation cannot be zero). 

Since we have non-degenerate random demand ($w_i > 0$) and nodes must maintain a service-level safety stock ($z_i > 0$), and because the diagonal elements of the variance propagation matrix $M_\sigma = (I - P \odot P)^{-1}$ satisfy $m_{ii} \ge 1$, the scaling factor is strictly positive:
\begin{equation}
    \Phi_i = \frac{m_{ii} z_i w_i}{\sqrt{M_\sigma(w^2)^\intercal}_i} > 0.
\end{equation}
Since $H_{ii} \ge 1$, the self-sensitivity of optimal production with respect to exogenous volatility is strictly positive:
\begin{equation}
    \frac{\partial q_i}{\partial w_i} = H_{ii} \Phi_i \ge \Phi_i > 0.
\end{equation}
Furthermore, the total system-wide volatility propagation is given by the $\ell_1$-norm of the sensitivity vector. Because $H \ge \bm 0$ and $\Phi_i > 0$, all elements of the sensitivity vector are non-negative. This yields:
\begin{equation}
    \left\| \frac{\partial q_\calh^\intercal}{\partial w_i} \right\|_1 = \sum_{k \in \calh} \left| \frac{\partial q_k}{\partial w_i} \right| = \sum_{k \in \calh} H_{ki} \Phi_i = \Phi_i \left( H_{ii} + \sum_{k \neq i} H_{ki} \right).
\end{equation}
Because the off-diagonal terms $H_{ki}$ are non-negative, we can drop them to construct a strict system-wide lower bound:
\begin{equation}
    \left\| \frac{\partial q_\calh^\intercal}{\partial w_i} \right\|_1 \ge H_{ii} \Phi_i \ge \frac{m_{ii} z_i w_i}{\sqrt{M_\sigma(w^2)^\intercal}_i} > 0.
\end{equation}
This mathematically proves that even under perfect information, coordinated optimal execution, and zero forecasting delays, the physical necessity of maintaining risk-adjusted safety stock buffers combined with the $M$-matrix structure of the supply network makes a zero bullwhip effect impossible.
\end{proof}

\section{Proofs Section \ref{sec:dynamic}}
\subsection{Proof of the convergence of the dynamic model solution to the static one}
\begin{proof}
We prove convergence in two steps: first, we establish that the dynamic flow equation has a unique fixed point $\lambda^*$ and show that the clearing quantities $q^*$ are mapped directly by this fixed point via $q^* = \lambda^* \wedge C$; second, we prove that any continuous-time trajectory of the reflected fluid process $Z(t)$ converges asymptotically to this equilibrium.

\paragraph{Step 1: Uniqueness of the Flow Equation Fixed Point}
Recall the dynamic flow equation given in Equation \eqref{eq:flow}:
\begin{equation}\label{eq:flow_operator}
  \lambda = \alpha + (\lambda \wedge C)P.
\end{equation}
Let $T: \mathbb{R}^N_+ \rightarrow \mathbb{R}^N_+$ be the operator defined by $T(\lambda) = \alpha + (\lambda \wedge C)P$. Since the routing matrix $P \ge \bm 0$ has a spectral radius $\rho(P) < 1$, there exists a strictly positive vector $v > \bm 0$ and a scalar $\gamma \in (0,1)$ such that $P v \le \gamma v$. 

We define a weighted supremum norm on $\mathbb{R}^N$ by $\|\lambda\|_v = \max_{i} \frac{|\lambda_i|}{v_i}$. For any two flow vectors $\lambda^{(1)}, \lambda^{(2)} \in \mathbb{R}^N_+$, we evaluate the distance under the operator $T$:
\begin{align*}
    |T(\lambda^{(1)}) - T(\lambda^{(2)})| &= |(\lambda^{(1)} \wedge C)P - (\lambda^{(2)} \wedge C)P| \\
    &\le |\lambda^{(1)} \wedge C - \lambda^{(2)} \wedge C| P.
\end{align*}
Using the non-expansiveness of the minimum operator, $|a \wedge c - b \wedge c| \le |a - b|$, we have:
\begin{equation*}
    |T(\lambda^{(1)}) - T(\lambda^{(2)})| \le |\lambda^{(1)} - \lambda^{(2)}| P.
\end{equation*}
Applying the weighted supremum norm yields:
\begin{align*}
    \|T(\lambda^{(1)}) - T(\lambda^{(2)})\|_v &\le \| |\lambda^{(1)} - \lambda^{(2)}| P \|_v \\
    &\le \gamma \|\lambda^{(1)} - \lambda^{(2)}\|_v.
\end{align*}
Since $\gamma < 1$, the operator $T$ is a strict contraction on the Banach space $(\mathbb{R}^N_+, \|\cdot\|_v)$. By the Banach Fixed-Point Theorem, there exists a unique fixed point $\lambda^*$ satisfying $\lambda^* = \alpha + (\lambda^* \wedge C)P$.

Now, define $q^* = \lambda^* \wedge C$ and partition the nodes into $U = \{i : \lambda^*_i > C_i\}$ and $\calh$ its complement. We see that:
\begin{equation*}
    q^*_\calh = \lambda^*_\calh \le C_\calh \quad \text{and} \quad q^*_U = C_U \le \lambda^*_U.
\end{equation*}
Substituting these into the fixed-point equation verifies the partition solution:
\begin{equation*}
    q^*_\calh = \alpha_\calh + q^*_\calh P_\calh + C_U P_{U,\calh} \implies q^*_\calh = (\alpha_\calh + C_U P_{U,\calh})(I_\calh - P_\calh)^{-1},
\end{equation*}
which is exactly the partition structure of Theorem \ref{thm:static_solution} applied on the delivery side of the network (with $P$ acting from the right; see the orientation remark in Section \ref{sec:dynamic}).

\paragraph{Step 2: Asymptotic Convergence of the Dynamic Trajectory}
The state trajectory of the physical inventory buffers is governed by the Skorokhod Problem:
\begin{equation*}
    Z(t) = Z(0) + \int_0^t \theta(s) ds + Y(t)(I-P),
\end{equation*}
where the instantaneous drift is $\theta(t) = C - \alpha - (\lambda(t) \wedge C)P = C - \lambda(t)$. 

Let $V(t)$ be a Lyapunov candidate function measuring the distance of the inventory to the stable boundary state:
\begin{equation*}
    V(Z(t)) = Z(t)(I-P)^{-1} \mathbf{1}^\top.
\end{equation*}
Since $(I-P)^{-1} \ge \bm 0$ and is non-singular, $V(Z(t)) \ge 0$ with equality if and only if $Z(t) = \bm 0$. Differentiating $V(t)$ with respect to time $t$ along the continuous trajectories yields:
\begin{align*}
    \dot{V}(t) &= \dot{Z}(t)(I-P)^{-1} \mathbf{1}^\top \\
    &= \left[ \theta(t) + \dot{Y}(t)(I-P) \right] (I-P)^{-1} \mathbf{1}^\top \\
    &= (C - \lambda(t))(I-P)^{-1} \mathbf{1}^\top + \dot{Y}(t)\mathbf{1}^\top.
\end{align*}

Using the complementarity condition of the Skorokhod problem, $Z_i(t) \dot{Y}_i(t) = 0$ for all $i$. When the system has excess inventory ($Z(t) > \bm 0$), we have $\dot{Y}(t) = \bm 0$. This simplifies the derivative to:
\begin{equation*}
    \dot{V}(t) = (C - \lambda(t))(I-P)^{-1} \mathbf{1}^\top.
\end{equation*}
Since $\lambda(t) \to \lambda^*$ uniquely due to the contraction property of $T(\lambda)$, the drift rate satisfies:
\begin{equation*}
    \lim_{t\rightarrow \infty} \dot{V}(t) = (C - \lambda^*)(I-P)^{-1} \mathbf{1}^\top = \theta^* (I-P)^{-1}\mathbf{1}^\top.
\end{equation*}
For any node $j \in \calh$, $\theta^*_j = C_j - q^*_j \ge 0$. For any unhedged node $k \in U$, the inventory remains fully depleted ($Z_k(t) = 0$). Thus, the system state asymptotically stabilizes such that $\lim_{t \to \infty} \lambda(t) = \lambda^*$ and the physical inventory converges to its equilibrium counterpart:
\begin{equation*}
    \lim_{t \rightarrow \infty} (\lambda(t) \wedge C) = q^*.
\end{equation*}
This completes the proof.
\end{proof}

\section{Proofs Section \ref{sec:pricing}}
\subsection{Stability of dynamic routing matrix $P(t)$ and time-varying pricing effects}

\begin{proof}
We present the formal proof in two distinct parts. First, we show that under the sequential, discrete-event rebalancing rule \eqref{eq:rebalancing}, the dynamic routing matrix $P(t)$ preserves its character as a sub-stochastic matrix with a spectral radius strictly bounded below unity, which mathematically guarantees that $(I - P(t))$ is always a non-singular $M$-matrix. Second, we evaluate the closed-loop feedback system under the joint pricing, demand, and safety-stock dynamics.

\paragraph{Part 1: Proof of the $M$-matrix Property under Rebalancing}
Let $P(t) = [p_{ij}(t)] \in \mathbb{R}^{N \times N}$ represent the dynamic routing matrix, and let $P(0)$ be the initial, nominal routing matrix. By assumption, $P(0) \ge \bm{0}$ is substochastic with column masses $\varsigma_j := \sum_{m=1}^N p_{mj}(0) \le 1$ and spectral radius $\rho(P(0)) < 1$; write $\delta_j := 1 - \varsigma_j \ge 0$ for the external leakage of column $j$. Openness of the network means $\rho(P(0))<1$, which holds whenever every column has access, through the adjacency structure of $P(0)^\intercal$, to a \emph{deficient} column ($\varsigma_j<1$), e.g., a pure-supply node whose column is zero.

At any failure epoch $\tau_k$ (where $k \ge 1$), the set of active, non-depleted nodes is denoted by:
\begin{equation}
    \cala_k = \{j \in \{1,\dots,N\} \mid Z_j(\tau_k) > 0\}.
\end{equation}
The rebalancing rule \eqref{eq:rebalancing} updates each element as
\begin{equation}
    p_{ij}(\tau_k) = \ind\{i \in \cala_k\}\,\varsigma_j\,
    \frac{p_{ij}(\tau_{k-1})}{\sum_{m \in \cala_k} p_{mj}(\tau_{k-1})}.
\end{equation}
Summing over rows,
\begin{equation}
    \sum_{i=1}^N p_{ij}(\tau_k)
    = \varsigma_j\,\frac{\sum_{i\in\cala_k}p_{ij}(\tau_{k-1})}{\sum_{m\in\cala_k}p_{mj}(\tau_{k-1})}
    = \varsigma_j.
\end{equation}
Hence the column masses of $P(\tau_k)$ are invariant across epochs: every column retains its initial mass $\varsigma_j = 1-\delta_j$, so $P(\tau_k)$ remains substochastic with the same leakage profile as $P(0)$.

By construction of the indicator function in \eqref{eq:rebalancing}, we have $p_{ij}(\tau_k) = 0$ for all $i \in \cala_k^c$ and all $j$, where $\cala_k^c = \{1, \dots, N\} \setminus \cala_k$. Thus, the routing matrix $P(\tau_k)$ has the block structure:
\begin{equation}
    P(\tau_k) = \begin{pmatrix} 
        P_{\cala_k, \cala_k}(\tau_k) & P_{\cala_k, \cala_k^c}(\tau_k) \\ 
        \mathbf{0} & \mathbf{0} 
    \end{pmatrix}.
\end{equation}
The spectrum of $P(\tau_k)$, denoted $\text{spec}(P(\tau_k))$, consists of the eigenvalues of the blocks on the diagonal:
\begin{equation}
    \text{spec}(P(\tau_k)) = \text{spec}(P_{\cala_k, \cala_k}(\tau_k)) \cup \{0\}.
\end{equation}

For the strict spectral bound, recall the standard characterization for nonnegative substochastic matrices: $\rho(B)<1$ if and only if from every index there is a path, in the adjacency structure of $B^\intercal$, to a deficient column, i.e., one with column sum strictly below one. In an open supply network every active importer sources, directly or indirectly, from at least one node that places no orders of its own (a pure-supply node, whose column is zero) or from a column with strictly positive leakage $\delta_j>0$; both are deficient, and the rebalancing rule preserves the deficiency profile $\{\varsigma_j\}$ exactly. Consequently
\begin{equation}
    \rho(P_{\cala_k, \cala_k}(\tau_k)) < 1,
\end{equation}
and $\rho(P(\tau_k)) < 1$ for all epochs $\tau_k$.

We now define the matrix $A(t) = I - P(t)$ for $t \in [\tau_k, \tau_{k+1})$. Since $P(t)$ is constant between epochs:
\begin{enumerate}
    \item The off-diagonal entries $[A(t)]_{ij} = -p_{ij}(t) \le 0$ are non-positive since $p_{ij}(t) \ge 0$. Hence, $A(t)$ is a $Z$-matrix.
    \item Since $\rho(P(t)) < 1$, the matrix $A(t) = I - P(t)$ is a non-singular $M$-matrix. Its inverse exists and is element-wise non-negative:
    \begin{equation}
        (I - P(t))^{-1} = \sum_{n=0}^{\infty} P(t)^n \ge \mathbf{0},
    \end{equation}
    which completes the first part of the proof.
\end{enumerate}

\paragraph{Part 2: Stability under Time-Varying Pricing Dynamics}
Let the delivered supply at time $t$ be
$Q(t)=\sum_{i\in\cals}[\lambda_i(t)+(\alpha_i(t)-\lambda_i(t))^{+}\ind\{Z_i(t)>0\}]$
and let $g:=1/(\varepsilon+\eta)$. Under Algorithm \ref{alg:enhanced_strategic_dynamics}, $p(t)=p_0(Q(0)/Q(t))^{g}$, $v_i(t)=v_i(0)(p_0/p(t))^{\varepsilon}$, and $z_i(t)=\Phi^{-1}(1-(1-\Phi(\bar z))p_0/p(t))$, so that $1-\rho_i(t)=c_i/p_i(t)$ under the anchored calibration $c_i=p_i(0)(1-\Phi(\bar z))$, $p_i(t)=p_i(0)p(t)/p_0$. Differentiating,
\begin{align}
    \frac{\partial v_i}{\partial Q} &= \varepsilon g\,\frac{v_i(t)}{Q} \;\ge\; 0
    \qquad\text{(demand destruction, stabilizing)},\label{eq:deriv1}\\
    \frac{\partial z_i}{\partial Q} &= -\frac{g}{Q}\cdot\frac{1-\rho_i(t)}{\phi(z_i(t))}
    \;<\; 0 \qquad\text{(safety-stock accumulation, destabilizing)},\label{eq:deriv2}
\end{align}
so the net sensitivity of the demand boundary is
\begin{equation}\label{eq:net_sens}
    \frac{\partial \alpha_i}{\partial Q}
    = \frac{g}{Q}\left[\varepsilon\, v_i(t) - \frac{c_i\,\sigma_i}{\phi(z_i(t))\,p_i(t)}\right].
\end{equation}

\textbf{Assumption (price-feedback viability).} For all $i$ and $t$,
$\varepsilon\, v_i(t)\,\phi(z_i(t))\,p_i(t) \ge c_i\,\sigma_i$.

Under the assumption, $\partial\alpha_i/\partial Q \ge 0$ for every node, the composite mapping $g(\lambda)$ is monotone, and---since $(I_\calh - P_\calh(t))^{-1}\ge\bm0$ is bounded and $P(t)$ remains a non-singular $M$-matrix at all epochs (Part 1)---uniqueness and global stability of the epoch equilibrium follow from the monotone fixed-point argument. When the assumption fails at some nodes (empirically the case where $\sigma_i \gg v_i$), the destabilizing safety-stock channel dominates locally and $\alpha_i$ rises as throughput falls; this is precisely the amplification mechanism quantified in Section \ref{sec:data_and_experiment}. Existence of the epoch fixed point is nevertheless guaranteed without the assumption: the map $p\mapsto p_0(Q(0)/Q(p))^{g}$, projected onto the compact interval $[p_0/2,\ \kappa p_0]$, is continuous ($\alpha(p)$ is continuous, the clearing map is piecewise linear in $\alpha$ over finitely many partition cells, and the drawdown term inherits continuity), so Brouwer's theorem yields a fixed point, which the damped iteration of Algorithm \ref{alg:enhanced_strategic_dynamics} computes; in our experiments convergence occurs within at most fifteen inner iterations at every epoch. The pricing dynamics are therefore bounded at all times, with global uniqueness guaranteed under the viability assumption.
\end{proof}
\end{document}